\documentclass{amsart}

\usepackage[english]{babel} 
\usepackage[utf8]{inputenc} 
\usepackage[T1]{fontenc}
\usepackage{lmodern} 
\usepackage{thmtools} 
\usepackage{mathtools}
\usepackage{mleftright}
\usepackage{amssymb} 
\usepackage{hyperref}
\usepackage{tikz-cd}
\usepackage{floatrow}
\usepackage{graphicx}
\usepackage{caption}
\usepackage{enumitem}
\usepackage{todonotes}
\usepackage{mleftright}
\usepackage{cleveref} 



\declaretheorem[
name         = Theorem,
refname      = {Theorem,Theorems},
Refname      = {Theorem,Theorems},
numberwithin = section,
]{theorem}

\declaretheorem[
name         = Proposition,
refname      = {Proposition,Propositions},
Refname      = {Proposition,Propositions},
sibling      = theorem,
]{proposition}

\declaretheorem[
name         = Lemma,
refname      = {Lemma,Lemmas},
Refname      = {Lemma,Lemmas},
sibling      = theorem,
]{lemma}

\declaretheorem[
name         = Corollary,
refname      = {Corollary,Corollaries},
Refname      = {Corollary,Corollaries},
sibling      = theorem,
]{corollary}


\declaretheorem[
name        = Question,
refname     = {Question,Questions},
Refname     = {Question,Questions},
sibling     = theorem,
]{question}


\theoremstyle{definition}

\declaretheorem[
name        = Definition,
refname     = {Definition,Definitions},
Refname     = {Definition,Definitions},
sibling     = theorem,
]{definition}

\declaretheorem[
name        = Notation,
refname     = {Notation,Notations},
Refname     = {Notation,Notations},
sibling     = theorem,
]{notation}

\declaretheorem[
name        = Example,
refname     = {Example,Examples},
Refname     = {Example,Examples},
sibling     = theorem,
]{example}

\declaretheorem[
name        = Remark,
refname     = {Remark,Remarks},
Refname     = {Remark,Remarks},
sibling     = theorem,
]{remark}

\newlist{tfae}{enumerate}{1}%
\setlist[tfae,1]{label=(\roman*)}%

\newcommand{\cat}[1]{\mathsf{#1}}

\newcommand{\Quot}{\mathbf{Q}} 
\newcommand{\QuotEquiv}{\tilde{\mathbf{Q}}}
\newcommand{\Subm}{\mathbf{S}} 
\newcommand{\op}{\cat{op}}
\newcommand{\SMKH}{\cat{MetCH_{sep}}}
\newcommand{\SMKHop}{\cat{MetCH}_\cat{sep}^\op}
\newcommand{\SSMKH}{\cat{MetCH_{sep,sym}}}
\newcommand{\SSMKHop}{\cat{MetCH_{sep,sym}^\op}}
\newcommand{\CompHaus}{\cat{CH}}

\newcommand{\PosComp}{\cat{PosCH}}

\newcommand{\MKH}{\cat{MetCH}}

\newcommand{\epi}{\twoheadrightarrow}

\newcommand{\rmono}{\hookrightarrow}

\newcommand{\Set}{\cat{Set}}


\newcommand{\eps}{\varepsilon}

\newcommand{\upset}{\ensuremath{\mathord{\uparrow}\mkern1mu}} 
\newcommand{\downset}{\ensuremath{\mathord{\downarrow}\mkern1mu}} 

\newcommand{\black}{\color{black}}

\title{Barr-coexactness for metric compact Hausdorff spaces}

\author{Marco Abbadini}

\thanks{The first author's research was supported by the UK Research and Innovation (UKRI) under the UK government’s Horizon Europe funding guarantee (Project ``DCPOS'', grant number EP/Y015029/1) and by the Italian Ministry of University and Research through the PRIN project n.~20173WKCM5 \emph{Theory and applications of resource sensitive logics}}

\address{School of Computer Science, University of Birmingham, B15 2TT Birmingham, UK.}

\author{Dirk Hofmann}

\address{Center for Research and Development in Mathematics and Applications, Department of Mathematics, University of Aveiro, Portugal.}

\thanks{The second author acknowledges support by CIDMA under the FCT (Portuguese Foundation for Science and Technology) Multi-Annual Financing Program for R\&D Units.}

\keywords{Metric compact Hausdorff space, Stone-type duality, Nachbin space, compact ordered space, regular category, exact category, quantale enriched category}

\subjclass{%
  06F30, 
  54E45, 
  54F05, 
  18E08
}

\begin{document}

\begin{abstract}
  A \emph{metric compact Hausdorff space} is a Lawvere metric space equipped with a \emph{compatible} compact Hausdorff topology (which does not need to be the induced topology).
  These spaces maintain many important features of compact metric spaces, but the resulting category is much better behaved.

  In the category of separated metric compact Hausdorff spaces, we characterise the regular monomorphisms as the embeddings and the epimorphisms as the surjective morphisms.
  Moreover, we show that epimorphisms out of an object $X$ can be encoded internally on $X$ by their kernel metrics, which are characterised as the continuous metrics below the metric on $X$.
  Finally, as the main result, we prove that its dual category has an algebraic flavour: it is Barr-exact.
  While we show that it cannot be a variety of finitary algebras, it remains open whether it is an infinitary variety.
\end{abstract}

\maketitle

\tableofcontents

\section{Introduction}
\label{sec:introduction}

Since the early work of Pontrjagin \cite{Pon34} and Stone \cite{Sto36}, it is known that the duals of many categories of topology have an algebraic flavour: the category of compact Hausdorff Abelian groups is dually equivalent to the category of Abelian groups, and the category of Boolean spaces (particular compact Hausdorff spaces) is dually equivalent to the category of Boolean algebras.
Extending the latter fact, Duskin \cite{Dus69} observed that, besides the well-known fact that the category  \(\CompHaus\) of compact Hausdorff spaces and continuous maps is monadic over \(\Set\) \cite{Man69}, also its \emph{dual} category \(\CompHaus^\op\) is monadic over \(\Set\).
From a more concrete perspective, it is essentially shown in \cite{Gel41a} that \(\CompHaus^{\op}\) is equivalent to the category of commutative \(C^{*}\)-algebras and homomorphisms, and this category is indeed monadic over \(\Set\) with respect to the unit ball functor, as shown in \cite{Neg71}.
We also recall that, by the work of Stone \cite{Sto38} and Priestley \cite{Pri70, Pri72}, the category of Priestley spaces (Boolean spaces with a compatible partial order) and continuous monotone maps is dually equivalent to the category of distributive lattices and homomorphisms and therefore its dual category is also a variety.
Somewhat surprisingly, a similar investigation for related structures such as Nachbin's compact ordered spaces \cite{Nac65} was carried out only recently.
In \cite{HNN18} it is shown that the dual of the category \(\PosComp\) of compact ordered spaces and continuous monotone maps is a quasivariety, and in \cite{Abb19, AR20} it is finally shown that \(\PosComp^{\op}\) is also exact, and hence a variety.
In this paper we extend this line of research to include also metric structures.

Without doubt, the class of compact metric spaces (those metric spaces whose induced topology is compact) is an important class of metric spaces; however, together with non-expansive maps, this class forms a poorly behaved category.
Firstly, we cannot even form the coproduct of two singleton spaces, a shortcoming we can easily overcome by allowing the distance \(\infty\).
This modification allows us also to consider the sup-metric on an infinite product, which is indeed the product metric.
However, in general, the product metric does not induce the product topology and is therefore not necessarily compact.
For instance, for the two-element space \(2=\{0,1\}\) with distance \(1\) between \(0\) and \(1\), in \(2^{\mathbb{N}}\) the distance between two different points is also \(1\) and therefore, while the topological power \(2^{\mathbb{N}}\) is compact, the topology induced by the metric is discrete and hence non-compact.
We take this discrepancy as a motivation to consider not only metric spaces with an \emph{induced} compact (Hausdorff) topology but rather equipped with a \emph{compatible} compact Hausdorff topology.

This notion is also inspired by Nachbin's definition of a compact ordered space (see \cite{Nac65} and also \cite{Tho09}), where a compact Hausdorff space is equipped with a \emph{compatible} order relation.
In fact, ordered sets can be seen as a special case of metric structures once one drops the symmetry axiom from the definition of a metric space.
Accordingly, to include also the ordered case in our investigation, we consider here \emph{metric spaces} in a more general sense: a metric \(d\) on a set $X$ is a map $d \colon X\times X\to[0,\infty]$ which is only required to satisfy
\begin{displaymath}
  d(x,x)=0
  \quad\text{and}\quad
  d(x,z)\leq d(x,y)+d(y,z)
\end{displaymath}
for all \(x,y,z\in X\).
These two axioms are analogous to the reflexivity and the transitivity conditions of a preorder.
Under this analogy, the metric counterpart of anti-symmetry requires that, for all \(x,y\in X\), $d(x,y)=0=d(y,x)$ implies $x=y$; a metric space \((X,d)\) where \(d\) has this property is called \emph{separated}.
Let us note that the analogy between metric and order structures can be made more precise in the setting of enriched categories.
For instance, the transitivity condition of a preorder and the triangular inequality of a metric space can be both seen as an instance of the composition law of a category; similarly, reflexivity and the condition \(d(x,x)=0\) correspond to the identity law of a category; see \cite{Law73} for details.

In this paper we are interested in the category of compact Hausdorff spaces equipped with a \emph{compatible} separated metric (called separated metric compact Hausdorff spaces here, see \Cref{sec:prelimiaries} for details) and continuous non-expansive maps. This category constitutes a natural common roof for the category of compact ordered spaces and continuous monotone maps as well as the category of compact metric spaces and non-expansive maps.
Moreover, it has more pleasant properties than the latter one as it is, for instance, complete and cocomplete (see \cite{Tho09}).
Various properties and constructions of compact metric spaces and of compact ordered spaces can be naturally extended to this category. For instance, every metric compatible with some compact Hausdorff topology is Cauchy complete (see \cite{HR18}), and \cite{HN20} introduces a Hausdorff functor on this category combing naturally the Hausdorff metric and the Vietoris topology.
We also point out that this type of spaces proved to be useful in an extension of Stone-type dualities and of the notion of continuous lattice to metric structures (see \cite{GH13,HN18,HN23}).

Motivated by the corresponding results for compact Hausdorff spaces and compact ordered spaces, in this note we investigate the algebraic character of the dual of the category $\SMKH$ of separated metric compact Hausdorff spaces and continuous non-expansive maps.
We recall (see \cite{Bor94a}, for instance) that a category \(\mathsf{C}\) is \emph{regular} whenever \(\mathsf{C}\) is finitely complete, has pushouts of kernel pairs and regular epimorphisms are pullback-stable.
Moreover, \(\mathsf{C}\) is \emph{Barr-exact} whenever it is regular and every internal equivalence relation is effective, i.e., a kernel pair.
Being Barr-exact expresses an algebraic trait: for example, every variety of (possibly infinitary) algebras is Barr-exact; on the other hand, the category of topological spaces and continuous maps is not.
Barr-exactness distinguishes varieties from quasivarieties and can be seen as a categorical way to express the property of being ``closed under quotients of congruences''.
The main result of this paper states that the category \(\SMKH\) is Barr-coexact, that is, \(\SMKHop\) is Barr-exact.
Since \(\SMKH\) is known to be complete and cocomplete, to obtain this result we show that
\begin{itemize}
\item the regular monomorphisms and the epimorphisms in \(\SMKH\) are precisely the embeddings and the surjective morphisms, respectively (\Cref{s:quotient-objects}),
\item embeddings are stable under pushouts (\Cref{sec:coreg-separ-metr}), and
\item equivalence corelations are effective (\Cref{sec:barr-coex-separ}).
\end{itemize}

\section{Separated metric compact Hausdorff spaces}
\label{sec:prelimiaries}

We recall from the introduction that, in this paper, by a \emph{metric} we mean a map \(d \colon X\times X\to [0,\infty]\) such that, for all \(x,y,z\in X\),
  \begin{displaymath}
    d(x,x)=0
    \quad\text{and}\quad
    d(x,z)\leq d(x,y)+d(y,z).
  \end{displaymath}
  We say that \(d\) is \emph{separated} if, for all \(x,y\in X\), \(d(x,y)=0=d(y,x)\) implies \(x=y\).

We introduce the main objects of interest: separated metric compact Hausdorff spaces.
\begin{definition}[\cite{HR18}]
  \label{d:def:1}
  A \emph{(separated) metric compact Hausdorff space} \(X\) is a compact Hausdorff space \(X\) together with a (separated) metric  $d \colon X \times X \to [0,\infty]$ that is continuous with respect to the upper topology of $[0,\infty]$.
\end{definition}

We recall that the open subsets of the upper topology on \([0,\infty]\) are generated by the sets \(]u,\infty]\); hence, the non-empty closed subsets of \([0,\infty]\) are of the form $[0,u]$, with $u\in[0,\infty]$.
Throughout this paper we will often make use of the fact that, with respect to the upper topology, every non-empty compact subset of $[0,\infty]$ has a smallest element.
The space \([0,\infty]\) with the upper topology should be seen as the metric counterpart of the Sierpi{\'n}ski space \(\{0,1\}\) with \(\{1\}\) closed.
Accordingly, the notion of a separated metric compact Hausdorff space is the metric counterpart of Nachbin's compact ordered spaces.
A more general approach to ordered and metric (and other) topological structures is given in \cite{Tho09}, see \cref{r:algebras-for-ultrafilter-monad}.

The continuity with respect to the upper topology of $[0, \infty]$ of a function $f \colon X \to [0, \infty]$, with $X$ a topological space, is also known as \emph{lower semicontinuity} of $f$ (for instance, see \cite[IV.6.2]{Bourbaki}), and is equivalently described by the pointwise formula (for $x_0 \in X$)
\[
f(x_0) \leq \liminf_{x \to x_0} f(x)
\]
(or, equivalently, $f(x_0) = \liminf_{x \to x_0} f(x)$).
Therefore, the compatibility between the metric and the topology in \cref{d:def:1} amounts to the requirement that for all $x_0,y_0 \in X$ we have
\[
	d(x_0, y_0) \leq \liminf_{\substack{x \to x_0\\ y\to y_0}} d(x,y).
\]

  Every classical metric space whose induced topology is compact can be viewed as a separated metric compact Hausdorff space.
  More generally, every separated metric space whose induced symmetric topology is compact can be viewed as a separated metric compact Hausdorff space (see \cite{HR18}).
  Here we recall from \cite{Fla97} that the \emph{topology symmetrically induced by a metric \(d \colon X\times X\to [0,\infty]\)} is generated by the right and left open balls (with \(x_0\in X\) and \(u\in [0,\infty]\))
  \begin{displaymath}
    B_r(x_0,u)\coloneqq\{x\in X\mid d(x_0,x)<u\}
    \quad\text{and}\quad
    B_l(x_0,u)\coloneqq\{x\in X\mid d(x,x_0)<u\}.
  \end{displaymath}
  This topology is also generated by the symmetric open balls
  \begin{displaymath}
    B(x_0,u)\coloneqq B_r(x_0,u)\cap B_l(x_0,u)=
    \{x\in X\mid d(x_0,x)<u\;\text{and}\; d(x,x_0)<u\},
  \end{displaymath}
  with \(x_0\in X\) and \(u\in [0,\infty]\) (see \cite[Theorem~4.8]{Fla97}).

Next, we provide an example of a metric space whose induced topology is not compact, but which admits a natural compatible compact Hausdorff topology.

\begin{example}
  The interval \([0,\infty]\) becomes a separated metric space with the metric \(d\) defined by $d(u,v) = \max(v - u, 0)$ (see \cite{Law73}).
  This metric induces the topology on \([0,\infty]\) generated by the symmetric open balls (with \(u\in [0,\infty]\) and \(\varepsilon>0\))
  \begin{displaymath}
    \{v\in [0,\infty]\mid d(u,v)<\varepsilon \text{ and }d(v,u)<\varepsilon\}.
  \end{displaymath}
  We emphasise that this topology on \([0,\infty]\) is not compact, basically because $\infty$ is an isolated point.
  However, we may consider the \emph{Lawson topology} \cite{GHK+03} on \([0,\infty]\) which is generated by the basic open subsets $[0,u[$ and $]u,\infty]$, with $u\in [0,\infty]$.
  With respect to this topology, the interval \([0,\infty]\) is a compact Hausdorff space, and together with the metric \(d\) it becomes a separated metric compact Hausdorff space.
\end{example}

\begin{remark} \label{r:algebras-for-ultrafilter-monad}
  The definition of a metric compact Hausdorff space was pulled out of the hat in \cref{d:def:1}. However, it is not an arbitrary condition, but rather the specialisation to the metric setting of a more general notion. To explain the rationale behind it, the starting point is the observation made in \cite{Tho09} that the ultrafilter monad on \(\Set\) can be naturally extended to a monad on the category of preordered sets and monotone maps and on the category of (Lawvere) metric spaces and non-expansive maps, respectively.
  In the preordered case, the algebras for this monad are precisely Nachbin's preordered compact Hausdorff spaces \cite{Nac65}, and therefore the algebras in the metric case constitute a natural metric counterpart to preordered compact Hausdorff spaces.
  It is shown in \cite{HR18} that this algebraic description is equivalent to the condition in \cref{d:def:1}.
\end{remark}

\begin{definition}
	A map $f \colon X\to X'$ between metric spaces $(X,d)$ and $(X',d')$ is called \emph{non-expansive} if $d'(f(x),f(y))\leq d(x,y)$ for all $x,y\in X$.
	We denote the category of separated metric compact Hausdorff spaces and non-expansive continuous maps by \(\SMKH\).
\end{definition}

Below we collect some important properties of \(\SMKH\); for more details we refer to \cite{Tho09,GH13,HR18,HN20}.
\begin{theorem}\label{d:thm:1}
  The canonical forgetful functor \(\MKH\to\CompHaus\) from the category of metric compact Hausdorff spaces and continuous non-expansive maps to \(\CompHaus\) is topological.
  The full subcategory \(\SMKH\) of \(\MKH\) is reflective and therefore is complete and cocomplete.
  In particular:
  \begin{enumerate}
  \item The limit of a diagram \(D \colon\cat{I}\to\SMKH\) is given by the limit
    \begin{math}
      (p_i \colon X\to D(i))_{i\in\cat{I}}
    \end{math}
    in \(\CompHaus\) equipped with the \emph{sup-metric}
    \begin{displaymath}
      d(x,y)=\sup_{i\in\cat{I}}d_i(p_i(x),p_i(y)),
      \quad (x,y\in X),
    \end{displaymath}
    where \(d_i\) denotes the metric of the space \(D(i)\).
  \item The coproduct \(X=X_1+X_2\) of metric compact Hausdorff spaces \(X_1\) and \(X_2\) with metrics \(d_1\) and \(d_2\) is their disjoint union equipped with the coproduct topology and the coproduct metric, i.e., the metric \(d\) defined by
    \begin{displaymath}
      d(x,y)=
      \begin{cases}
        d_1(x,y) & \text{if \(x,y\in X_1\)},\\
        d_2(x,y) & \text{if \(x,y\in X_2\)},\\
        \infty & \text{otherwise}.
      \end{cases}
    \end{displaymath}
   \end{enumerate}
\end{theorem}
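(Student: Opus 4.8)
The plan is to prove topologicity first and then read off all remaining assertions from the general theory of topological functors together with a reflection argument. For topologicity of the forgetful functor $U \colon \MKH \to \CompHaus$, I would construct a $U$-initial lift of an arbitrary structured source $(f_i \colon X \to U(A_i))_{i \in I}$, where $X$ is compact Hausdorff and each $A_i$ carries a compatible metric $d_i$. The candidate is the \emph{initial metric}
\[
  d(x,y) \coloneqq \sup_{i \in I} d_i(f_i(x), f_i(y)).
\]
It is immediate that $d(x,x) = 0$ and that the triangle inequality survives the supremum, and the $f_i$ are non-expansive by construction. The one nonroutine point is compatibility of $d$ with the topology of $X$, i.e. lower semicontinuity: each $d_i \circ (f_i \times f_i)$ is lower semicontinuous (the lower semicontinuous $d_i$ composed with the continuous $f_i \times f_i$), and a pointwise supremum of lower semicontinuous functions is lower semicontinuous, since $\{(x,y) : d(x,y) > u\} = \bigcup_i \{(x,y) : d_i(f_i x, f_i y) > u\}$ is a union of opens. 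Initiality is then automatic from the supremum: given $(Y,d_Y) \in \MKH$ and a continuous $g \colon Y \to X$, one has $d(gy, gy') \le d_Y(y,y')$ for all $y, y'$ exactly when $d_i(f_i g y, f_i g y') \le d_Y(y,y')$ for every $i$, i.e. $g$ is non-expansive into $(X,d)$ iff each $f_i \circ g$ is non-expansive.

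With topologicity in hand, completeness and cocompleteness of $\MKH$ follow from those of $\CompHaus$, and (co)limits are computed by placing the initial (respectively final) metric on the underlying $\CompHaus$-(co)limit. Feeding a limit cone $(p_i \colon X \to D(i))$ into the initial construction produces exactly the sup-metric of item~(1). For the binary coproduct, the underlying space is the topological coproduct $X_1 \sqcup X_2$, which is compact Hausdorff, and the final metric is the largest compatible metric making both coprojections non-expansive; this forces $d = d_k$ within each (clopen) summand and permits the maximal value $\infty$ across summands, the triangle inequality being preserved since every mixed triangle carries an $\infty$ on its right-hand side. Lower semicontinuity is clear because the four pieces of $(X_1 \sqcup X_2)^2$ are clopen, yielding the formula of item~(2).

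Next I would establish reflectivity of $\SMKH$ in $\MKH$ by the separated quotient. For $(X,d) \in \MKH$, set $x \approx y$ iff $d(x,y) = 0 = d(y,x)$. Lower semicontinuity makes $\{(x,y) : d(x,y) \le 0\}$ closed, so $\approx$ is a closed equivalence relation on the compact Hausdorff space $X$; hence $q \colon X \to X/\!\approx$ is a closed quotient onto a compact Hausdorff space, the induced metric $\bar d$ is well defined and separated, and $\bar d$ is again lower semicontinuous because $q \times q$ is a quotient map. Any non-expansive continuous map from $X$ to a separated space kills $\approx$ (if $d(x,y) = 0 = d(y,x)$ then the images are at distance $0$ in both directions, hence equal), so $q$ is the reflection unit. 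Thus $\SMKH$ is reflective, and therefore complete and cocomplete.

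Finally I would transfer the explicit formulas from $\MKH$ to $\SMKH$. A reflective subcategory is closed under limits, so it is enough to note that the sup-metric limit of separated spaces is separated: $d(x,y) = 0 = d(y,x)$ forces $d_i(p_i x, p_i y) = 0 = d_i(p_i y, p_i x)$, hence $p_i x = p_i y$ for all $i$ and so $x = y$ by joint monicity of the limit projections; thus item~(1) holds in $\SMKH$. For coproducts, the $\SMKH$-coproduct is the reflection of the $\MKH$-coproduct, but the coproduct metric of item~(2) is already separated when $d_1, d_2$ are (points of different summands lie at distance $\infty \ne 0$), so the reflection is trivial and item~(2) persists. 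The genuine difficulty is concentrated in the lower-semicontinuity checks — for the initial metric in the topologicity step and for the quotient metric in the reflection step — whereas, once these are secured, the remaining claims are formal consequences of the theory of topological functors and reflective subcategories, each needing only the short verification that separatedness is inherited by the ambient construction.
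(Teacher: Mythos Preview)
Your proof is correct. The paper does not actually prove this theorem: it presents it as a collection of known facts and refers to \cite{Tho09,GH13,HR18,HN20} for details, where the result is established in the broader framework of quantale-enriched categories and algebras for the ultrafilter monad (cf.\ \cref{r:algebras-for-ultrafilter-monad}). Your argument is more elementary and self-contained, working directly with lower semicontinuity rather than invoking that machinery; this buys accessibility at the cost of being specific to the $[0,\infty]$-enriched case. The one step worth flagging is the lower semicontinuity of the quotient metric $\bar d$: your appeal to $q \times q$ being a quotient map is valid here precisely because a continuous surjection between compact Hausdorff spaces is automatically closed and hence a quotient, so the argument would not transfer verbatim outside the compact setting.
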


An \emph{embedding} in \(\SMKH\) is an injective morphism such that the metric on the domain is the restriction of the metric on the codomain.

\black
\begin{theorem}
  \label{d:thm:2}
  In the category \(\SMKH\), the pair
  \[(\{\text{surjective morphisms}\}, \{\text{embeddings}\})\]
   is an orthogonal factorisation system.
\end{theorem}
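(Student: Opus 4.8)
The plan is to verify the three defining conditions of an orthogonal factorisation system: that both classes contain the isomorphisms and are closed under composition, that every morphism admits a factorisation as an embedding after a surjection, and that every surjection is orthogonal to every embedding (unique diagonal fill-in). The closure conditions are immediate: an isomorphism in \(\SMKH\) is a bijective metric-preserving homeomorphism (its inverse being non-expansive forces the metric to be preserved), hence it is both surjective and an embedding; composites of surjections are surjective, and composites of embeddings are embeddings since injectivity and the ``restricted metric'' condition both compose. For the factorisation of \(f \colon X \to Y\), I would take \(Z \coloneqq f[X]\) with the subspace topology inherited from \(Y\) and the metric \(d_Z \coloneqq d_Y|_{Z \times Z}\), letting \(e \colon X \to Z\) be the corestriction of \(f\) and \(m \colon Z \to Y\) the inclusion. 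Here \(Z\) is compact Hausdorff because \(f[X]\) is a continuous image of a compact space, hence compact and closed in the Hausdorff space \(Y\); the restricted metric is separated and remains lower semicontinuous, being the composite of \(d_Y\) with the embedding \(Z \times Z \hookrightarrow Y \times Y\). Thus \(Z \in \SMKH\), \(m\) is an embedding by construction, and \(e\) is a surjective continuous non-expansive map, giving the required factorisation.

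For orthogonality, consider a commuting square \(m \circ g = h \circ e\) with \(e \colon A \to B\) surjective and \(m \colon C \to D\) an embedding; I must produce a unique \(t \colon B \to C\) with \(t \circ e = g\) and \(m \circ t = h\). On underlying sets \(t\) is forced: given \(b \in B\), pick \(a\) with \(e(a) = b\) and set \(t(b) \coloneqq g(a)\). This is well defined because if \(e(a) = e(a')\) then \(m(g(a)) = h(e(a)) = h(e(a')) = m(g(a'))\) and \(m\) is injective, so \(g(a) = g(a')\); and it is the only possible choice since \(e\) is surjective, which simultaneously yields uniqueness of the diagonal.

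It remains to check that \(t\) is a morphism of \(\SMKH\). Non-expansiveness is a short computation exploiting that \(m\) is metric-preserving and \(h\) is non-expansive: for \(b, b' \in B\),
\[
  d_C(t(b), t(b')) = d_D(m(t(b)), m(t(b'))) = d_D(h(b), h(b')) \le d_B(b, b'),
\]
where the first equality uses the embedding property of \(m\) and the middle equality uses \(m \circ t = h\). For continuity, note that \(h[B] \subseteq m[C]\), since every \(b = e(a)\) satisfies \(h(b) = m(g(a))\); because \(C\) is compact and \(D\) is Hausdorff, the continuous injection \(m\) is a homeomorphism onto the closed subspace \(m[C]\), so \(m^{-1} \colon m[C] \to C\) is continuous and \(t = m^{-1} \circ \bar h\), where \(\bar h \colon B \to m[C]\) is the corestriction of the continuous map \(h\). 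Hence \(t\) is continuous, completing the fill-in.

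The genuinely substantive points are the two places where the compact Hausdorff topology interacts with the metric: verifying that the image object \(Z\) actually lies in \(\SMKH\) (in particular that the restricted metric stays lower semicontinuous) and establishing continuity of the diagonal \(t\), which rests on the fact that a continuous injection from a compact space into a Hausdorff space is a topological embedding. By comparison, the metric bookkeeping—non-expansiveness of \(e\) and \(t\), and well-definedness via injectivity of \(m\)—is formal, and separatedness plays only a supporting role. I therefore expect the main obstacle to be the topological input ensuring that embeddings in \(\SMKH\) are honest subspace inclusions and that images are legitimate objects, rather than any difficulty in the metric estimates.
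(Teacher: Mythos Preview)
Your proof is correct. Note, however, that the paper does not supply its own proof of this theorem: it is stated as a background fact with references to \cite{Tho09,GH13,HR18,HN20}, so there is no argument in the paper to compare against. Your direct verification of the factorisation-system axioms---constructing the image object with restricted topology and metric, and producing the diagonal fill-in via the topological-embedding property of continuous injections between compact Hausdorff spaces---is a clean self-contained argument and is exactly what one would expect such a verification to look like.
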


In~\Cref{sec:coreg-separ-metr} we will show that, in \(\SMKH\), the surjective morphisms are precisely the epimorphisms and the embeddings are precisely the regular monomorphisms (see \cref{p:rmono=emb,p:epi=surj}).

\begin{remark}[Compact ordered spaces as separated metric compact Hausdorff spaces]\label{d:rem:2}
  We recall from \cite{Nac65} that a \emph{compact ordered space} (also called \emph{compact pospace} or \emph{Nachbin space}) consists of a compact space $X$ together with a partial order $\leq$ on $X$ so that the set $\{(x,y)\in X\times X\mid x\leq y\}$ is closed in $X\times X$; such a space \(X\) is automatically Hausdorff.
  The category of compact ordered spaces and monotone and continuous maps is denoted by \(\PosComp\).

  Every compact ordered space \(X\) can be thought of as a separated metric compact Hausdorff space with the same compact Hausdorff space and the metric
  \begin{displaymath}
    d(x,y) \coloneqq
    \begin{cases}
      0 & \text{if \(x\leq y\)},\\
      \infty & \text{otherwise}.
    \end{cases}
  \end{displaymath}
  In fact, compact ordered spaces can be identified with the separated metric compact Hausdorff spaces whose metric takes values in $\{0, \infty\}$.
  Since every map between compact ordered spaces is monotone if and only if it is non-expansive with respect to the corresponding metrics, we obtain a fully faithful functor
  \begin{displaymath}
    \PosComp \longrightarrow\SMKH.
  \end{displaymath}
  This functor has a right adjoint which leaves maps unchanged and sends a separated metric compact Hausdorff space \(X\) (with metric \(d\)) to the compact ordered space \(X\) with the same topology and the partial order given by
  \begin{displaymath}
    x\le y\quad\text{whenever}\quad d(x,y)=0.
  \end{displaymath}
  In particular, we conclude that \(\PosComp\) is closed in \(\SMKH\) under colimits, and it is easy to see that \(\PosComp\) is closed in under limits, too.
  The unit interval \([0,1]\) is a cogenerator in \(\PosComp\) and \(\PosComp\) is well-powered; therefore, the Special Adjoint Functor Theorem guarantees that \(\PosComp\to\SMKH\) has also a left adjoint.
  \begin{displaymath}
    \begin{tikzcd}
      \PosComp
      \ar{r}[outer sep=0.5ex]{\bot}[swap,outer sep=0.5ex]{\bot}
      & \SMKH
      \ar[bend right]{l} \ar[bend left]{l}
    \end{tikzcd}
  \end{displaymath}
\end{remark}

\begin{remark}[Symmetrisation]
  \label{d:rem:1}
  Every metric space \(X\) with metric \(d\) can be symmetrised by putting $d_s(x,y) \coloneqq \max\{d(x,y), d(y,x)\}$ (see \cite{Law73}).
  This metric is compatible with the topology: if \(d \colon X\times X\to [0,\infty]\) is continuous, then so is \(d_s \colon X\times X\to [0,\infty]\), since the map
  \begin{displaymath}
    \max \colon [0,\infty]\times [0,\infty] \longrightarrow [0,\infty]
  \end{displaymath}
  is continuous with respect to the upper topology. In fact, this construction defines a right adjoint to the full embedding
  \begin{displaymath}
    \SSMKH \longrightarrow\SMKH
  \end{displaymath}
  of the category \(\SSMKH\) of symmetric separated metric compact Hausdorff spaces and continuous non-expansive maps into \(\SMKH\). Similarly to \cref{d:rem:2}, we conclude that the category $\SSMKH$ is closed under limits and colimits in $\SMKH$.
  Therefore the inclusion functor $\SSMKH \rightarrow \SMKH$ has also a left adjoint by the Adjoint Functor Theorem (the solution set condition is trivial).
\end{remark}

Below we briefly indicate an example where metric compact Hausdorff spaces played a crucial role.
\begin{example}
  It is well-known that every classical metric space with compact \emph{induced} topology is Cauchy complete. However, to infer Cauchy completeness of a metric space it suffices to show that there is a \emph{compatible} compact Hausdorff topology, that is, the metric space is part of a metric compact Hausdorff space. To give a trivial example, consider an arbitrary product of classical metric spaces with compact induced topology. The product metric does not in general induce a compact topology; however, this metric is Cauchy complete because the product topology is compact Hausdorff and compatible. Less trivially, for every metric space \((X,d)\), the space \(UX\) of all ultrafilters on \(X\) equipped with the metric
  \begin{displaymath}
    h(\mathcal{U},\mathcal{V})=\sup_{(A\in \mathcal{U},\,B\in \mathcal{V})}\inf_{(x\in A,\,y\in B)}d(x,y)
  \end{displaymath}
  is Cauchy complete because the Zariski topology on \(UX\) (which is independent of \(d\)) is compatible with the metric \(h\) (see \cite{HR18}).
\end{example}

In a metric compact Hausdorff space, the compatibility between the metric and the topology is weaker than the one when considering the topology induced by the metric.
Roughly speaking, in the latter case open balls are open and, consequently, closed balls are closed, whereby in a metric compact Hausdorff space closed balls are closed, but open balls need not be open. For example, take any compact Hausdorff space with the discrete metric, which assigns distance $1$ to each pair of distinct points. Every singleton is an ``open ball'' of radius $\frac{1}{2}$, but it might fail to be open.
The following result makes this relationship more precise.

\begin{proposition}\label{d:prop:1}
  Let \(d\) be a separated metric on a compact Hausdorff space \(X\). Then \(d\) symmetrically induces the topology of \(X\) if and only if the map $d \colon X \times X \longrightarrow [0,\infty]$  is continuous with respect to the lower topology on \([0,\infty]\).
  This condition, if $(X,d)$ is a separated metric compact Hausdorff space and $d$ does not attain the value $\infty$, is equivalent to the continuity of $d$ with respect to the Euclidean topology on $[0, \infty[$.
\end{proposition}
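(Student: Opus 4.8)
The plan is to route everything through the symmetrisation $d_s(x,y)\coloneqq\max\{d(x,y),d(y,x)\}$ of \cref{d:rem:1}. The first thing I would record is that the symmetric open balls coincide with the $d_s$-balls, since $B(x_0,u)=\{x\mid d(x_0,x)<u\text{ and }d(x,x_0)<u\}=\{x\mid d_s(x_0,x)<u\}$; hence the topology $\tau_s$ symmetrically induced by $d$ is exactly the metric topology of $d_s$, and these balls form a genuine basis for it (not merely a subbasis). Because $d$ is separated, $d_s$ is an honest (extended) classical metric: symmetry is immediate, the triangle inequality follows from that of $d$ by splitting the maximum, and $d_s(x,y)=0$ forces $x=y$. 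Consequently $\tau_s$ is Hausdorff. This structural identification is the conceptual core of the argument, and I expect the main subtlety to be not proving the equality $\tau=\tau_s$ directly by comparing open sets, but rather obtaining it ``for free'' from a single inclusion via the rigidity of compact Hausdorff topologies.

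For the implication that lower-continuity of $d$ forces $\tau_s$ to be the topology $\tau$ of $X$, I would argue as follows. Lower-continuity says that every set $\{(x,y)\mid d(x,y)<u\}$ is open in $X\times X$; precomposing with the continuous inclusions $y\mapsto(x_0,y)$ and $x\mapsto(x,x_0)$ shows that each right ball $B_r(x_0,u)$ and each left ball $B_l(x_0,u)$ is $\tau$-open, whence $\tau_s\subseteq\tau$. Therefore the identity map $(X,\tau)\to(X,\tau_s)$ is a continuous bijection from a compact space to a Hausdorff space, hence a homeomorphism, so $\tau=\tau_s$; that is, $d$ symmetrically induces the topology of $X$. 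This is where the Hausdorffness of $\tau_s$ established above does the real work.

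For the converse I would assume $\tau=\tau_s$ and fix $(x_0,y_0)$ with $d(x_0,y_0)<u$. Choosing $\varepsilon>0$ with $d(x_0,y_0)+2\varepsilon<u$ and using the triangle inequality, every $x\in B(x_0,\varepsilon)$ and $y\in B(y_0,\varepsilon)$ satisfies $d(x,y)\le d(x,x_0)+d(x_0,y_0)+d(y_0,y)<u$; since the symmetric balls form a basis for $\tau_s=\tau$, the product $B(x_0,\varepsilon)\times B(y_0,\varepsilon)$ is an open neighbourhood of $(x_0,y_0)$ contained in $\{(x,y)\mid d(x,y)<u\}$, so that set is open and $d$ is lower-continuous. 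This completes the first equivalence.

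For the second statement I would use that, on $[0,\infty[$, the Euclidean topology is the join of the upper and the lower topology, the intervals $]u,\infty[$ and $[0,v[$ forming a subbasis for it. A map into a space carrying the join of two topologies is continuous exactly when it is continuous for each of them separately. Since $(X,d)$ being a separated metric compact Hausdorff space already provides upper-continuity of $d$, and $d$ avoids the value $\infty$, Euclidean-continuity of $d$ reduces to its lower-continuity, which by the first part is equivalent to $d$ symmetrically inducing the topology of $X$. The only point needing a moment's care here is the identification of the Euclidean topology with the join, after which everything is a routine ``continuous into a subbasis'' check.
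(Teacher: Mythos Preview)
Your proof is correct and follows essentially the same approach as the paper: for one direction you show the left and right balls are $\tau$-open and invoke the compact-to-Hausdorff rigidity (exactly as the paper does, though you package the Hausdorffness of $\tau_s$ via $d_s$), and for the other direction you verify openness of $\{d<u\}$ directly via the triangle inequality, which is a minor variant of the paper's formula $\{(x,y)\mid d(x,y)<u\}=\bigcup_{z,\,u_1+u_2\leq u}B_l(z,u_1)\times B_r(z,u_2)$. The treatment of the final clause via the join of the upper and lower topologies is identical to the paper's.
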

\begin{proof}
  If the topology on \(X\) is symmetrically induced by the metric \(d\),
  then, for every \(u\in [0,\infty]\), the set
  \begin{displaymath}
    \{(x,y)\in X\times X\mid d(x,y)<u\}
    =\bigcup_{z,\,u_1+u_2\leq u}B_l(z,u_1)\times B_r(z,u_2)
  \end{displaymath}
  is open in \(X\times X\). Hence, \(d \colon X\times X\to [0,\infty]\) is continuous with respect to the lower topology on \([0,\infty]\).
  Conversely, assume now that the metric is continuous with respect to the lower topology on \([0,\infty]\).
  Since \(d\) is separated, the topology induced by \(d\) is Hausdorff.
  Moreover, by continuity of \(d\), the sets
  \begin{displaymath}
    \{y\in X\mid d(x,y)<u\}\quad\text{and}\quad
    \{y\in X\mid d(y,x)<u\}
  \end{displaymath}
  are open also in the given compact Hausdorff topology, and hence the two topologies coincide.

  The last part of the statement follows from the fact that a map into $[0, \infty[$ is continuous with respect to the Euclidean topology if and only if it is continuous with respect to the lower and the upper topology.
\end{proof}

\begin{remark}
  \cref{d:prop:1} is a generalisation of the following well-known fact.
  If a partial order on a compact Hausdorff space \(X\) is open in \(X\times X\), then it is also closed and, moreover, the topology of \(X\) is generated by the sets $\downset x$ and $\upset x$,
  with \(x\in X\) (see also \cite[Theorem~5]{Nac65}).
\end{remark}


\section{Quotient objects and continuous submetrics}
\label{s:quotient-objects}

In this paper we will study equivalence relations in $\SMKHop$ by looking at their duals in $\SMKH$, which are particular epimorphisms.
In this section we prepare the ground by giving a description of surjective morphisms in \(\SMKH\) (= epimorphisms by \cref{p:epi=surj} below) which is similar to the presentation of surjections out of a set by equivalence relations.

We let $\Quot(X)$ denote the class of surjective morphisms of separated metric compact Hausdorff spaces with domain $X$.
We consider $\Quot(X)$ as the full subcategory of the \emph{coslice category} $X\downarrow \SMKH$ of objects of $\SMKH$ under $X$.
Since surjective morphisms are epimorphisms in $\SMKH$, the category $\Quot(X)$ is actually a preordered class:
\[
	f \leq g \quad \iff \quad
	\begin{tikzcd}
		&X \arrow[swap]{dl}{f}\arrow{rd}{g}\\
		Y \arrow[dashed,swap]{rr}{\exists h} && Z.
	\end{tikzcd}
\]

\begin{remark}
  \label{d:rem:3}
  Since the pair \((\{\text{surjective morphisms}\}, \{\text{embeddings}\})\) is an orthogonal factorisation system on $\SMKH$, $\Quot(X)$ is a coreflective full subcategory of $X\downarrow \SMKH$: the coreflector assigns to a morphism the surjective component in its factorization.
\end{remark}

\begin{definition}[Quotient object]\label{n:quotequiv}
  By identifying elements in the same equivalence class, from $\Quot(X)$ we obtain a partially ordered class (in fact, a set) $\QuotEquiv(X)$, whose elements we call \emph{quotient objects} of $X$.
  With a little abuse of notation, we take the liberty to refer to an element of $\QuotEquiv(X)$ just with one of its representatives.
\end{definition}

Our next goal is to encode quotient objects of $X$ internally on $X$.

\begin{remark}
	Encodings similar to those that we are seeking already appear for similar categories.
	For example, by \cite[The Alexandroff Theorem~3.2.11]{Engelking1989}\footnote{The reader is warned that, in \cite{Engelking1989}, by `compact space' is meant what we here call a compact \emph{Hausdorff} space.}, in the category $\CompHaus$ of compact Hausdorff spaces and continuous functions, a surjective morphism $f \colon X \epi Y$ is encoded by the equivalence relation ${\sim_f} \coloneqq \{(x,y) \in X \times X \mid f(x) = f(y)\}$; the equivalence relation $\sim_f$ is closed, and, in fact, there is a bijection between equivalence classes of surjective morphisms of compact Hausdorff spaces with domain $X$ and closed equivalence relations on $X$.
	There are also analogous versions for Boolean spaces, namely \emph{Boolean relations}\footnote{Sometimes called \emph{Boolean equivalences}.} \cite[Lemma~1, Chapter 37]{GivantHalmos2009}, for Priestley spaces, namely \emph{lattice preorders} \cite[Definition~2.3]{CignoliLafalceEtAl1991}\footnote{Lattice preorders are also called \emph{Priestley quasiorders} (\cite[Definition~3.5]{Schmid2002}), or \emph{compatible quasiorders}.}, and for Nachbin's compact ordered spaces \cite[Lemma~11]{AR20}.
\end{remark}

We encode a quotient object $f \colon X \epi Y$ of $X \in \SMKH$ via a certain (possibly non-separated) metric $\kappa_f$ on $X$, as follows.

\begin{definition}[Kernel metric]\label{n:les-f}
  For a morphism $f\colon X \to Y$ in $\SMKH$, we call \emph{kernel metric} of $f$ the initial metric structure (also called Cartesian lifting) on the compact Hausdorff space \(X\) with respect to the continuous map \(f\) and the metric on \(Y\); explicitly, the function $\kappa_f \colon X \times X \to [0, \infty]$ defined by
  \[
    \kappa_f(x_1,x_2) \coloneqq d_Y(f(x_1),f(x_2)).
  \]
\end{definition}

We note that, being the structure of a metric compact Hausdorff space, \(\kappa_f\) is continuous with respect to the upper topology on \([0,\infty]\).

\Cref{n:les-f} will be relevant especially for $f$ surjective.
The idea is that a surjective morphism $f$ can be completely recovered from $\kappa_f$ (up to an isomorphism). In order to establish an inverse for the assignment that maps $f$ to its kernel metric ${\kappa_f}$, we investigate the properties satisfied by kernel metrics: these properties are precisely being a continuous metric below the given metric (\cref{d:prop:2}).
\begin{definition}[Continuous submetric]
  Let \(X\) be a separated metric compact Hausdorff space with metric \(d\).
  A \emph{continuous submetric} \(\gamma\) on \(X\) is a (not necessarily separated) continuous metric \(\gamma \colon X\times X\to [0,\infty]\)  with respect to the upper topology of $[0,\infty]$ which is below the given metric \(d\), i.e., for all $x,y \in X$, $\gamma(x,y) \leq d(x,y)$.
\end{definition}
By construction, for every \(f \colon X\to Y\) in \(\SMKH\), \(\kappa_f\) is a submetric of the metric of \(X\).

\begin{remark}\label{r:sym}
  We shall now see how one recovers a surjective morphism $f$ from its kernel metric $\kappa_f$.
  Let $(X,d)$ be a separated metric compact Hausdorff space and let ${\gamma}$ be a continuous submetric on $X$.
  Then take the separation-reflection $X/{\sim_\gamma}$ of $X$ with respect to $\gamma$ (for instance, see \cite{HN20}).
  We recall that $X/{\sim_\gamma}$ is the quotient of $X$ with respect to the equivalence relation $\sim_\gamma$ defined by
  \[
  x \sim_\gamma y \iff \gamma(x,y) = \gamma(y,x) = 0;
  \]
  the topology is the quotient topology, and the metric is $d_{X/{\sim_\gamma}}([x],[y]) = \gamma(x,y)$.
  Since $\gamma$ is below $d$, the projection function $p_{\gamma}\colon X \epi X/{\sim_\gamma}$ is non-expansive: $d_{X/{\sim_\gamma}}([x],[y]) = \gamma(x,y) \leq d(x,y)$.
  Moreover, it is continuous since $X/{\sim_\gamma}$ has the quotient topology.
  Thus, the function $p_{\gamma}\colon X \epi X/{\sim_\gamma}$ is a surjective morphism in $\SMKH$.
\end{remark}

\begin{definition}
	For $X$ a separated metric compact Hausdorff space, we let $\Subm(X)$ denote the set of continuous submetrics on \(X\).
We equip $\Subm(X)$ with the pointwise partial order, i.e.\ $\gamma_1\leq \gamma_2$ whenever, for all $x,y\in X$, $\gamma_1(x,y) \leq \gamma_2(x,y)$.
\end{definition}

\begin{theorem}\label{d:prop:2}
  The assignments
  \begin{align*}
    \Quot(X) & \longrightarrow \Subm(X), & \Subm(X) & \longrightarrow	\Quot(X)\\
    \big(f \colon X\epi Y\big)	& \longmapsto {\kappa_f} & {\gamma} & \longmapsto \big(p_{\gamma}\colon X\epi X/{\sim_\gamma}\big)\notag
  \end{align*}
  form a dual equivalence. Therefore the posets \(\QuotEquiv(X)\) and \(\Subm(X)\) are dually isomorphic.
\end{theorem}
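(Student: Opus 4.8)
The plan is to use that both $\Quot(X)$ and $\Subm(X)$ are thin categories (a preordered class and a poset, respectively). For thin categories a dual equivalence reduces to a handful of checks: that each assignment is well defined and order-reversing, and that the two round-trips are, respectively, the identity and isomorphic to the identity (where for $\Quot(X)$ ``isomorphic to'' means ``related by $\leq$ in both directions''). Well-definedness is already recorded: the discussion after \cref{n:les-f} shows $\kappa_f \in \Subm(X)$, and \cref{r:sym} shows $p_\gamma$ is a genuine surjective morphism, hence lies in $\Quot(X)$.

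First I would verify that both assignments reverse the order. Suppose $f \colon X \epi Y$ satisfies $f \leq g$ for $g \colon X \epi Z$, witnessed by a non-expansive continuous $h \colon Y \to Z$ with $g = h \circ f$. Then non-expansiveness of $h$ gives $\kappa_g(x_1,x_2) = d_Z(h f x_1, h f x_2) \leq d_Y(f x_1, f x_2) = \kappa_f(x_1,x_2)$, so $\kappa_g \leq \kappa_f$. Conversely, if $\gamma_1 \leq \gamma_2$ then $\sim_{\gamma_2}$ refines $\sim_{\gamma_1}$, so $[x] \mapsto [x]$ is a well-defined continuous surjection $X/{\sim_{\gamma_2}} \epi X/{\sim_{\gamma_1}}$ commuting with the projections, and it is non-expansive since the quotient metrics satisfy $\gamma_1 \leq \gamma_2$; hence $p_{\gamma_2} \leq p_{\gamma_1}$. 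One composite is then the identity on the nose: by the quotient-metric formula of \cref{r:sym}, $\kappa_{p_\gamma}(x_1,x_2) = d_{X/{\sim_{\gamma}}}([x_1],[x_2]) = \gamma(x_1,x_2)$, so $\kappa \circ p = \mathrm{id}_{\Subm(X)}$.

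The other composite is where the real content lies. Given $f \colon X \epi Y$, the relation $x_1 \sim_{\kappa_f} x_2$ unwinds to $d_Y(f x_1, f x_2) = d_Y(f x_2, f x_1) = 0$, and here separatedness of $Y$ is essential: it forces $f x_1 = f x_2$. Thus $\sim_{\kappa_f}$ is precisely the kernel equivalence of $f$, and surjectivity yields a bijection $\bar f \colon X/{\sim_{\kappa_f}} \to Y$, $[x] \mapsto f x$, with $\bar f \circ p_{\kappa_f} = f$. The quotient-metric formula gives $d_{X/{\sim_{\kappa_f}}}([x_1],[x_2]) = \kappa_f(x_1,x_2) = d_Y(f x_1, f x_2)$, so $\bar f$ is an isometry. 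Finally $\bar f$ is continuous by the universal property of the quotient topology (since $f$ is), and, as its domain is compact (a continuous image of $X$) and $Y$ is Hausdorff, $\bar f$ is automatically a homeomorphism. Hence $\bar f$ is an isomorphism in $\SMKH$, giving both $f \leq p_{\kappa_f}$ and $p_{\kappa_f} \leq f$, i.e.\ $p \circ \kappa \cong \mathrm{id}_{\Quot(X)}$.

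I expect the main obstacle to be this last paragraph rather than the order-theoretic bookkeeping: separatedness of the codomain is exactly what makes the kernel metric remember $f$ up to isomorphism, and the compact-to-Hausdorff principle is what promotes the continuous isometric bijection $\bar f$ to an isomorphism of spaces; one should also confirm that the separation-reflection $X/{\sim_\gamma}$ genuinely lands in $\SMKH$, which is where the construction cited in \cref{r:sym} is used. Assembling these checks gives the dual equivalence $\Quot(X) \simeq \Subm(X)^\op$, and passing to isomorphism classes collapses it to the asserted dual isomorphism of posets $\QuotEquiv(X) \cong \Subm(X)^\op$; in particular this exhibits $\QuotEquiv(X)$ as a set.
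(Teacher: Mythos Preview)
Your proof is correct and follows essentially the same route as the paper's: both show the two assignments are mutually inverse up to isomorphism by checking that $\kappa_{p_\gamma}=\gamma$ on the nose and that the canonical map $X/{\sim_{\kappa_f}}\to Y$ is an isomorphism. The paper packages the argument slightly more concisely by first recording the Galois connection $\kappa_f\leq\gamma\iff p_\gamma\leq f$ and then observing that the canonical map is a surjective embedding (hence an isomorphism by the orthogonal factorisation system of \cref{d:thm:2}), whereas you unpack this last step explicitly via the compact-to-Hausdorff principle; the content is the same.
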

\begin{proof}
  First we observe that, for all \(f \colon X \epi Y\in\Quot(X)\) and \(\gamma\in\Subm(X)\), one has
  \begin{displaymath}
    \kappa_f\leq\gamma\iff
    \big(p_{\gamma}\colon X\epi X/{\sim_\gamma}\big)\leq f,
  \end{displaymath}
  that is, the two maps above form a dual adjunction. Clearly, for each \(\gamma\in\Subm(X)\), the kernel metric of \(p_{\gamma}\colon X\epi X/{\sim_\gamma}\) is \(\gamma\). For every $f \colon X \epi Y$, the canonical function $\eps_f \colon X/{{\sim_\gamma}} \to Y$ is an embedding because, for all $x,y \in X$, we have
  \[
    d([x],[y]) = \kappa_f(x,y) = d_Y(f(x),f(y)) =
    d_Y(\eps_f([x]),\eps_f([y])).
  \]
  Therefore, $\eps_f$ is an isomorphism.
\end{proof}


\section{Coregularity for separated metric compact Hausdorff spaces}
\label{sec:coreg-separ-metr}

The purpose of this section is to prove that the category $\SMKH$ is coregular, i.e., that $\SMKHop$ is regular.
Recall from \Cref{sec:prelimiaries} that one of the conditions of coregularity is that regular monomorphisms are preserved under pushouts.
For this reason, we start by describing pushouts of embeddings; later on we will prove that the embeddings are precisely the regular monomorphisms.

\begin{proposition}[Description of a pushout along an embedding] \label{p:itisametric}
	Let $i\colon A \rmono X$ be an embedding and $f \colon A \to B$ a morphism in $\SMKH$.
	Let $\iota_B \colon B \hookrightarrow B + X$ and $\iota_X \colon X \hookrightarrow B + X$ denote the coproduct maps.
	The following describes a continuous submetric $\gamma$ on $B + X$:
	\begin{enumerate}
		\item
		For $b,b' \in B$,
		\[
			\gamma(\iota_B(b),\iota_B(b')) = d_B(b,b').
		\]

		\item
		For $x,x' \in X$,
		\begin{align*}
			&\gamma(\iota_X(x),\iota_X(x'))\\
			&= \min\mleft\{d_{X}(x,x'),\,\inf_{a, a' \in A}\big(d_X(x,i(a)) + d_{B}(f(a), f(a')) + d_{X}(i(a'),x')\big)\mright\}.
		\end{align*}

		\item \label{i:mixed}
		For $b \in B$ and $x \in X$,
		\begin{align*}
			\gamma(\iota_B(b),\iota_X(x)) & = \inf_{a \in A}\big(d_B(b,f(a)) + d_X(i(a),x) \big),\\
			\gamma(\iota_X(x),\iota_X(x)) & = \inf_{a \in A}\big(d_X(x,i(a)) + d_{B}(f(a), b)\big).
		\end{align*}
	\end{enumerate}
	Moreover, the quotient $(B+X)/{\sim_\gamma}$ (with the obvious maps) is the pushout of $B \leftarrow A \hookrightarrow X$.
\end{proposition}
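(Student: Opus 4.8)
The plan is to avoid verifying the universal property of the pushout by hand and instead route everything through the dual equivalence of \Cref{d:prop:2}. Recall that the pushout of the span $B \xleftarrow{f} A \xrightarrow{i} X$ (with $i$ an embedding) is the coequalizer of the pair $\iota_B f, \iota_X i \colon A \to B + X$, and that this coequalizer is surjective: it factors as a surjection followed by an embedding (\Cref{d:thm:2}), and being a regular (hence extremal) epimorphism, its embedding part is forced to be an isomorphism. Thus the coequalizer is an object of $\Quot(B+X)$. Under \Cref{d:prop:2}, a quotient $p_\delta \colon B+X \epi (B+X)/{\sim_\delta}$ coequalizes the pair precisely when $\delta(\iota_B f(a), \iota_X i(a)) = \delta(\iota_X i(a), \iota_B f(a)) = 0$ for all $a \in A$; and since the coequalizer is the \emph{finest} such quotient, it corresponds under the order-reversing equivalence to the \emph{largest} continuous submetric $\delta$ on $B+X$ satisfying these equalities. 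Hence it suffices to prove that the displayed formula defines exactly this largest submetric, for then $(B+X)/{\sim_\gamma}$ is the coequalizer, i.e.\ the pushout, with coprojections $p_\gamma \iota_B$ and $p_\gamma \iota_X$.

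First I would check that the formula defines a continuous submetric. Reflexivity is immediate on each summand, and $\gamma \leq d_{B+X}$ holds because on $B \times B$ the two agree, on $X\times X$ the formula is a minimum with $d_X$, and on the mixed parts $d_{B+X} = \infty$. For the triangle inequality I would identify $\gamma(z,z')$ with the infimum of the lengths of finite paths from $z$ to $z'$ that alternate between the two summands, each in-summand step costing the relevant distance and each ``jump'' $\iota_B f(a) \leftrightarrow \iota_X i(a)$ costing $0$; such a path-infimum obeys the triangle inequality by concatenation. The content is that the explicit formula, which uses at most two jumps, already realises this infimum: any interior segment in $X$ running between two points of $i(A)$ may be replaced by the corresponding segment in $B$, since $i$ being an embedding and $f$ non-expansive give $d_X(i(a), i(a')) = d_A(a,a') \geq d_B(f(a), f(a'))$. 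Iterating this collapse reduces every path to one of the shapes allowed by the formula, so $\gamma$ is a genuine submetric.

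Continuity for the upper topology is the main technical point. As $B+X$ is a topological disjoint union, it is enough to treat the clopen pieces $B\times B$, $B\times X$, $X\times B$, $X\times X$ separately. On each, $\gamma$ is assembled from the lower semicontinuous maps $d_B, d_X$ (precomposed with the continuous $f$ and $i$) by finite sums, a binary minimum, and an infimum over the compact space $A$ or $A\times A$. Finite sums and binary minima preserve lower semicontinuity; the key lemma is that an infimum over a \emph{compact} parameter space of a jointly lower semicontinuous function is again lower semicontinuous, which one proves using that a lower semicontinuous $[0,\infty]$-valued function attains its minimum on a compact space together with a convergent-subnet argument. This is precisely where compactness of $A$ is used, and I expect it to be the main obstacle: the soft categorical reduction of the first paragraph does not see it, and making the subnet extraction and the attainment of minima precise is the one genuinely analytic step.

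Finally, to see that $\gamma$ is the \emph{largest} constrained submetric, I would substitute $a' = a$ into the infima to obtain $\gamma(\iota_B f(a), \iota_X i(a)) = \gamma(\iota_X i(a), \iota_B f(a)) = 0$, so $\gamma$ meets the constraints; and for any submetric $\delta \leq d_{B+X}$ meeting them, a single pass of $\delta$'s triangle inequality along the relevant path (at most two jumps), bounding each in-summand step by $d_{B+X}$ and each jump by $0$ and then taking the infimum, together with the direct bound $\delta \leq d_{B+X}$ in the $X\times X$ case, yields $\delta \leq \gamma$ in all three cases. Therefore $\gamma$ is the largest constrained continuous submetric, and the identification of $(B+X)/{\sim_\gamma}$ with the pushout follows from the first paragraph.
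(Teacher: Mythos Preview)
Your argument is correct, and the high-level reduction via \Cref{d:prop:2} to ``$\gamma$ is the largest continuous submetric identifying $\iota_B f(a)$ with $\iota_X i(a)$'' is exactly how the paper handles the universal property as well (the paper phrases it as checking $\gamma'\leq\gamma$ for the kernel metric $\gamma'$ of an arbitrary competing cocone, which is the same thing). Where you genuinely diverge is in the verification that $\gamma$ is a metric: the paper simply grinds through all eight ordered triples from $\{B,X\}$ and checks the triangle inequality in each case, whereas your path-infimum description together with the collapse rule $d_X(i(a),i(a'))=d_A(a,a')\geq d_B(f(a),f(a'))$ gives a uniform reason why no more than two jumps are ever needed and why concatenation yields the triangle inequality for free. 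This is shorter and more conceptual, and it makes transparent \emph{why} the formula has the shape it does; the paper's case analysis, by contrast, is entirely self-contained and requires no auxiliary path language. For continuity, the paper packages the same compactness input differently, passing through the exponential $[0,\infty]^A$ in $\Top$ and the continuity of $\inf\colon[0,\infty]^A\to[0,\infty]$, rather than invoking your lemma on infima of jointly lower semicontinuous families over a compact parameter; the two arguments are equivalent, and yours is arguably more direct. One small point worth making explicit in your write-up: your claim that the coequaliser is surjective only needs that embeddings are \emph{monomorphisms} (so that a regular, hence extremal, epimorphism followed by an embedding forces the embedding to be an isomorphism), not that they are regular monomorphisms---important here, since the latter fact (\Cref{p:rmono=emb}) is proved \emph{using} the present proposition.
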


\begin{proof}

	For $x \in X$, $b \in B$ and $a \in A$, we write $x$ for $\iota_X(x)$, $b$ for $\iota_B(b)$, and $a$ for $i(a)$ and $\iota_X(i(a))$.

	A case analysis shows immediately that $\gamma$ is below $d_{B+X}$.
	To prove continuity of $\gamma \colon (B + X) \times (B + X) \to [0, \infty]$ with respect to the upper topology of $[0,\infty]$, it is enough to prove that it is continuous over each of its four pieces $B \times B$, $B \times X$, $X \times B$ and $X \times X$.
	It is clear that $\gamma$ is continuous over $B \times B$.
	We show continuity of $B \times X$; the other cases are similar.
	The function
	\begin{align*}
		B \times X & \longrightarrow [0, \infty]\\
		(b, x) & \longmapsto \inf_{a \in A} \big( d_B(b, f(a)) + d_X(a, x) \big)
	\end{align*}
	can be written as the composite of the following two functions
	\begin{align*}
		B \times X & \longrightarrow [0, \infty]^A & \inf \colon [0, \infty]^A& \longrightarrow [0, \infty]\\
		(b, x) & \longmapsto \big(a \mapsto d_B(b, f(a)) + d_X(a, x)\big) & f & \longmapsto \inf_{a \in A} f(a),
	\end{align*}
	where $[0, \infty]^A$ is the exponential in the category of topological spaces, which is the set of continuous functions from $A$ to $[0, \infty]$ equipped with the compact-open topology, and which exists because $A$ is a compact Hausdorff space (see \cite{EH01}, for instance). The first function is continuous because it is the transpose of the function
	\begin{align*}
		A \times B \times X & \longrightarrow [0, \infty]\\
		(a,b, x) & \longmapsto  d_B(b, f(a)) + d_X(a, x),
	\end{align*}
	which is continuous because $d_B$, $d_X$, $f$, and $+ \colon [0, \infty]^2 \to [0, \infty]$ are continuous.
	The function $\inf \colon [0, \infty]^A \to [0, \infty]$ is continuous because, for every \(u\in [0,\infty]\), we have
        \begin{align*}
          {\inf}^{-1}({]u,\infty]}) & =  \{\varphi \in [0,\infty]^A \mid \inf \varphi > u\} && \text{by definition of $\inf$}\\
          & = \{\varphi \in [0,\infty]^A \mid \exists v > u \text{ s.t.\ } \inf \varphi \geq v\} && \text{since $\varphi[A]$ is compact}\\
          &=\bigcup_{v>u}\{\varphi \in [0,\infty]^A \mid \varphi[A] \subseteq {]v,\infty]} \},
        \end{align*}
        and this set is open in \([0,\infty]^A\).
	Hence, their composite $B \times X \to [0, \infty]$ is continuous, as desired.

	Let us now prove that $\gamma$ is a metric.

	It is immediate that for all $z \in B + X$ we have $\gamma(z,z) = 0$.

	We now prove by cases that $\gamma$ satisfies the triangle inequality.

	Clearly, for all $b, b', b'' \in B$, we have
	\[
	\gamma(b, b'') = d_B(b, b'') \leq d_B(b, b') + d_B(b', b'') = \gamma(b, b') + \gamma(b', b''),
	\]
	proving a case of the triangle inequality.

	Let $x, x', x'' \in X$, and let us prove $\gamma(x, x'') \leq \gamma(x, x') + \gamma(x', x'')$.
	We have
	\begin{align*}
		\gamma(x,x'')& = \min\mleft\{d_{X}(x,x'),\,\inf_{a, a' \in A}\big(d_X(x,a) + d_{B}(f(a), f(a')) + d_{X}(a',x'') \big)\mright\}\\
		& \leq d_{X}(x,x'')\\
		& \leq d_{X}(x,x') + d_{X}(x',x'').
	\end{align*}
	Moreover, for all $a_0, a_0' \in A$, we have
	\begin{align*}
		\gamma(x,x'') & = \min\mleft\{d_{X}(x,x'),\,\inf_{a, a' \in A}\big(d_X(x,a) + d_{B}(f(a), f(a')) + d_{X}(a',x'')\big)\mright\}\\
		& \leq d_X(x,a_0) + d_{B}(f(a_0), f(a_0')) + d_{X}(a_0',x'')\\
		& \leq d_X(x,a_0) + d_{B}(f(a_0), f(a_0')) + d_{X}(a_0',x') + d_{X}(x',x'').
	\end{align*}
	Moreover, for all $a_1, a_1' \in A$, we have
	\begin{align*}
		\gamma(x,x'') & = \min\mleft\{d_{X}(x,x'),\,\inf_{a, a' \in A}\big(d_X(x,a) + d_{B}(f(a), f(a')) + d_{X}(a',x'')\big)\mright\}\\
		& \leq d_X(x,a_1) + d_{B}(f(a_1), f(a_1')) + d_{X}(a_1',x'')\\
		& \leq d_{X}(x,x') + d_X(x',a_1) + d_{B}(f(a_1), f(a_1')) + d_{X}(a_1',x'').
	\end{align*}
	Moreover, for all $a_0, a_0', a_1, a_1' \in A$ we have
	\begin{align*}
		&\gamma(x,x'') \\
		& = \min\mleft\{d_{X}(x,x'),\,\inf_{a, a' \in A}\big(d_X(x,a) + d_{B}(f(a), f(a')) + d_{X}(a',x'')\big)\mright\}\\
		& \leq d_X(x,a_0) + d_{B}(f(a_0), f(a_1')) + d_{X}(a_1',x'')\\
		& \leq d_X(x,a_0) + d_{B}(f(a_0), f(a_0')) + d_{B}(f(a_0'),f(a_1)) + d_{B}(f(a_1), f(a_1')) + d_{X}(a_1',x'')\\
		& \leq d_X(x,a_0) + d_{B}(f(a_0), f(a_0')) + d_{X}(a_0',a_1) + d_{B}(f(a_1), f(a_1')) + d_{X}(a_1',x'')\\
		& \leq d_X(x,a_0) + d_{B}(f(a_0), f(a_0')) + d_{X}(a_0',x') + d_X(x',a_1) + d_{B}(f(a_1), f(a_1')) + d_{X}(a_1',x'').
	\end{align*}
	Therefore, putting the four previous displays together, we obtain
	\begin{align*}
		&\gamma(x,x'')\\
		& \leq\min\Big\{d_{X}(x,x') + d_{X}(x',x''),\\
		& \inf_{a_0, a_0' \in A}\big(d_X(x,a_0) + d_{B}(f(a_0), f(a_0')) + d_{X}(a_0',x') + d_{X}(x',x'')\big),\\
		& \inf_{a_1, a_1' \in A}\big(d_{X}(x,x') + d_X(x',a_1) + d_{B}(f(a_1), f(a_1')) + d_{X}(a_1',x'')\big),\\
		& \inf_{a_0,a_0', a_1,a_1' \in A}\big(d_X(x,a_0) + d_{B}(f(a_0), f(a_0')) + d_{X}(a_0',x') + d_X(x',a_1)+\\
                & \qquad\qquad  d_{B}(f(a_1), f(a_1')) + d_{X}(a_1',x'') \big)\Big\}\\
		& = \min\mleft\{d_{X}(x,x'),\,\inf_{a_0, a_0' \in A}\big(d_X(x,a_0) + d_{B}(f(a_0), f(a_0')) + d_{X}(a_0',x') \big)\mright\}\\
		& \quad + \min\mleft\{d_{X}(x',x''),\,\inf_{a_1, a_1' \in A}\big(d_X(x',a_1) + d_{B}(f(a_1), f(a_1')) + d_{X}(a_1',x'')\big)\mright\}\\
		& =\gamma(x, x') + \gamma(x', x''),
	\end{align*}
	proving another case of the triangle inequality.

	Let $x \in X$ and $b, b' \in B$, and let us prove $\gamma(x,b') \leq \gamma(x,b) + \gamma(b, b')$.
	We have
	\begin{align*}
		\gamma(x,b') & = \inf_{a \in A}\big(d_X(x,a) + d_{B}(f(a), b')\big)\\
		& \leq \inf_{a \in A}\big(d_X(x,a) + d_{B}(f(a), b) + d_B(b,b')\big)\\
		& = \mleft(\inf_{a \in A}\big(d_X(x,a) + d_{B}(f(a), b)\big)\mright) + d_B(b,b')\\
		& = \gamma(x,b) + \gamma(b, b').
	\end{align*}
	This proves another case of the triangle inequality.
	Similarly, one proves $\gamma(b',x) \leq \gamma(b', b) + \gamma(b,x)$.

	Now, let $x,x' \in X$ and $b \in B$, and let us prove $\gamma(x,b) \leq \gamma(x,x') + \gamma(x', b)$.
	For all $a_1 \in A$ we have
	\begin{align*}
		\gamma(x,b) & = \inf_{a \in A}\big(d_X(x,a) + d_{B}(f(a), b)\big)\\
		&\leq d_X(x,a_1) + d_{B}(f(a_1), b)\\
		& \leq d_{X}(x,x') +d_X(x',a_1) + d_{B}(f(a), b).
	\end{align*}
	Moreover, for all $a_0, a_0', a_1 \in A$, we have
	\begin{align*}
		\gamma(x,b) & = \inf_{a \in A}\big(d_X(x,a) + d_{B}(f(a), b)\big)\\
		& \leq d_X(x,a_0) + d_{B}(f(a_0), b)\\
		& \leq d_X(x,a_0) + d_{B}(f(a_0), f(a_0')) + d_{B}(f(a_0'),f(a_1)) + d_{B}(f(a_1), b)\\
		& \leq d_X(x,a_0) + d_{B}(f(a_0), f(a_0')) + d_{A}(a_0',a_1) + d_{B}(f(a_1), b)\\
		& = d_X(x,a_0) + d_{B}(f(a_0), f(a_0')) + d_{X}(a_0',a_1) + d_{B}(f(a_1), b)\\
		& \leq d_X(x,a_0) + d_{B}(f(a_0), f(a_0')) + d_{X}(a_0',x') + d_X(x',a_1) + d_{B}(f(a_1), b).
	\end{align*}
	By combining the two displayed inequalities above, we get
	\begin{align*}
		& \gamma(x,b) \\
		& \leq \min\Big\{\inf_{a_1 \in A}\big(d_{X}(x,x') +d_X(x',a_1) + d_{B}(f(a_1), b)\big),\\
		& \inf_{a_0, a_0', a_1 \in A}\big(d_X(x,a_0) + d_{B}(f(a_0), f(a_0')) + d_{X}(a_0',x') + d_X(x',a_1) + d_{B}(f(a_1), b) \big)\Big\}\\
		& = \min\mleft\{d_{X}(x,x'),\,\inf_{a_0, a_0' \in A}\big(d_X(x,a_0) + d_{B}(f(a_0), f(a_0')) + d_{X}(a_0',x')\big)\mright\}\\
		& \quad +  \inf_{a_1 \in A}\big(d_X(x',a_1) + d_{B}(f(a_1), b)\big)\\
		& = \gamma(x,x') + \gamma(x', b).
	\end{align*}
	This proves another case of the triangle inequality.
	Similarly, one proves $\gamma(b, x) \leq \gamma(b, x') + \gamma(x', x)$.

	Now, let $x, x' \in X$ and $b \in B$, and let us prove $\gamma(x, x') \leq \gamma(x, b) + \gamma(b, x')$.
	For all $a_0, a_0' \in A$ we have
	\begin{align*}
		\gamma(x, x') & = \min\mleft\{d_{X}(x,x'),\,\inf_{a, a' \in A}\big(d_X(x,a) + d_{B}(f(a), f(a')) + d_{X}(a',x')\big)\mright\}\\
		& \leq d_X(x, a_0) + d_B(f(a_0), f(a_0')) + d_X(a_0', x') \\
		& \leq d_X(x, a_0) + d_B(f(a_0), b) + d_B(b, f(a_0')) + d_X(a_0', x').
	\end{align*}
	Therefore,
	\begin{align*}
		\gamma(x, x') & \leq \inf_{a_0, a_0' \in A}\big(d_X(x, a_0) + d_B(f(a_0), b) + d_B(b, f(a_0')) + d_X(a_0', x')\big)\\
		& = \inf_{a_0 \in A}\big(d_X(x,a_0) + d_{B}(f(a_0), b)\big) + \inf_{a_0' \in A}\big(d_B(b,f(a_0')) + d_X(a_0',x')\big)\\
		& = \gamma(x, b) + \gamma(b, x').
	\end{align*}
	This proves another case of the triangle inequality.

	Finally, let $x \in X$ and $b, b' \in B$, and let us prove $\gamma(b, b') \leq \gamma(b, x) + \gamma(x, b')$.
	For all $a_0, a_0' \in A$ we have
	\begin{align*}
		\gamma(b, b')& = d_B(b, b')\\
		& \leq d_B(b,f(a_0)) + d_B(f(a_0),f(a_0')) + d_{B}(f(a_0'), b')\\
		& \leq d_B(b,f(a_0)) + d_A(a_0,a_0') + d_{B}(f(a_0'), b')\\
		& = d_B(b,f(a_0)) + d_X(a_0,a_0') + d_{B}(f(a_0'), b')\\
		& \leq d_B(b,f(a_0)) + d_X(a_0,x) + d_X(x,a_0') + d_{B}(f(a_0'), b').
	\end{align*}
	Therefore,
	\begin{align*}
		\gamma(b, b') & \leq \inf_{a_0, a_0' \in A}\big(d_B(b,f(a_0)) + d_X(a_0,x) + d_X(x,a_0') + d_{B}(f(a_0'), b')\big)\\
		& = \inf_{a_0 \in A}(d_B(b,f(a_0)) + d_X(a_0,x)) + \inf_{a_0' \in A}\big(d_X(x,a_0') + d_{B}(f(a_0), b') \big)\\
		& = \gamma(b, x) + \gamma(x, b').
	\end{align*}
	This proves the last case of the triangle inequality.

	This proves that $\gamma$ is a continuous submetric on $B + X$.

	We now have a diagram
	\[
		\begin{tikzcd}
		      A \arrow[hookrightarrow]{r}{i} \arrow[swap]{d}{f} & X	\arrow{d}{} \\
		      B \arrow[swap]{r}{} & (B + X)/{\sim}. \arrow[ul, phantom, very near start]
	    \end{tikzcd}
	\]
	We prove that it is a pushout.
	The square commutes since, by \eqref{i:mixed}, for all $a \in A$
	\[
		\gamma(f(a), a) = \gamma(a,f(a)) = 0.
	\]
	Let us prove the universal property.
	Let
	\[
		\begin{tikzcd}
		      A \arrow[hookrightarrow]{r}{i} \arrow[swap]{d}{f} & X	\arrow{d}{g} \\
		      B \arrow[swap]{r}{j} & Y \arrow[ul, phantom, very near start]
	    \end{tikzcd}
	\]
	be a commutative square.
	Let $\gamma'$ be the continuous submetric on $B + X$ associated to the canonical morphism $B + X \to Y$.
	By \cref{d:prop:2} and \cref{d:rem:3}, it is enough to prove $\gamma \geq \gamma'$.

	Since $j$ is non-expansive, for all $b,b' \in B$ we have
	\[
		\gamma'(b,b') = d_{Y}(j(b),j(b')) \leq d_B(b,b') = \gamma(b,b').
	\]

	Let $x,x' \in X$. Since $g$ is non-expansive, we have
	\[
		d_{Y}(g(x),g(x')) \leq d_{X}(x, x').
	\]
	Moreover, for all $a, a' \in A$, we have
	\begin{align*}
    		&d_Y(g(x), g(x'))\\
		& \leq d_Y(g(x), jf(a)) + d_Y(jf(a), jf(a')) + d_Y(jf(a'),g(x')) && \text{(by the triangle ineq.)}\\
		&  = d_Y(g(x), g(a)) + d_Y(jf(a), jf(a')) + d_Y(g(a'),g(x')) && \text{(since $j \circ f = g \circ i$)}\\
		& \leq d_X(x, a) + d_B(f(a),f(b)) + d_{X}(a',x'),
    \end{align*}
    where the last inequality holds since $g$ and $j$ are non-expansive.
    Thus,
	\begin{align*}
		\gamma'(x,x') & = d_Y(g(x),g(x'))\\
		&\leq \min\mleft\{d_{X}(x,x'),\,\inf_{a, a' \in A}\big(d_X(x,a) + d_{B}(f(a), f(a')) + d_{X}(a',x')\big)\mright\}\\
		& = \gamma(x,x').
	\end{align*}

    Let $b \in B$ and $x \in X$.
    For all $a \in A$ we have
    \begin{align*}
		& d_Y(j(b),g(x))\\
		& \leq d_Y(j(b),jf(a)) + d_Y(g(a),g(x)) && \text{(by triangle ineq.\ and since $j \circ f = g \circ i$)}\\
		& \leq d_B(b,f(a)) + d_X(a,x),
    \end{align*}
    and, similarly,
    \begin{align*}
    		& d_Y(g(x),j(b))\\
		& \leq d_Y(g(x),g(a)) + d_Y(jf(a),j(b))  && \text{(by triangle ineq.\ and since $g \circ i = j \circ f$)}\\
		& \leq d_X(x,(a)) + d_B(f(a),b),
    \end{align*}
    Therefore,
    \begin{align*}
    		\gamma'(b,x) = d_{Y}(j(b),g(x)) & = \inf_{a \in A}\big(d_B(b,f(a)) + d_X(a,x)\big) = \gamma(b,x),\\
		\gamma'(x,b) = d_{Y}(g(x),j(b)) &= \inf_{a \in A}\big(d_X(x,a) + d_{B}(f(a), b)\big) = \gamma(x,b). \qedhere
    \end{align*}
\end{proof}

\begin{corollary}\label{c:pushout-of-emb-is-emb}
	In $\SMKH$, the pushout of an embedding along any morphism is an embedding.
\end{corollary}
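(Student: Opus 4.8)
The plan is to read the pushout directly off the explicit description in \cref{p:itisametric} and then check the two defining conditions of an embedding for the relevant pushout leg. Given the embedding $i \colon A \rmono X$ and an arbitrary morphism $f \colon A \to B$, \cref{p:itisametric} produces a continuous submetric $\gamma$ on $B + X$ and identifies the pushout of $B \leftarrow A \hookrightarrow X$ with the separation-reflection $(B+X)/{\sim_\gamma}$. The leg of this pushout that is the pushout of $i$ (the morphism parallel to $i$) is the composite $f' \coloneqq p_\gamma \circ \iota_B \colon B \to (B+X)/{\sim_\gamma}$, which sends $b \mapsto [\iota_B(b)]$. The whole task is to show that $f'$ is an embedding, i.e.\ that it is injective and that the metric on $B$ is the restriction of the metric on the codomain.

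First I would verify metric-preservation. By \cref{r:sym}, the metric on the quotient is $d_{(B+X)/{\sim_\gamma}}([z],[w]) = \gamma(z,w)$, so for $b, b' \in B$ we have $d_{(B+X)/{\sim_\gamma}}(f'(b), f'(b')) = \gamma(\iota_B(b), \iota_B(b'))$, which equals $d_B(b,b')$ by part~(1) of \cref{p:itisametric}. Thus $f'$ pulls the codomain metric back to $d_B$ exactly. Next I would verify injectivity. If $f'(b) = f'(b')$, then $\iota_B(b) \sim_\gamma \iota_B(b')$, that is $\gamma(\iota_B(b), \iota_B(b')) = \gamma(\iota_B(b'), \iota_B(b)) = 0$; again by part~(1) these are $d_B(b,b')$ and $d_B(b',b)$, so both vanish, and since $B$ is separated we conclude $b = b'$. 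Hence $f'$ is an injective, metric-preserving morphism, i.e.\ an embedding, as required.

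I expect essentially no obstacle to remain, because all the substantive work---constructing $\gamma$, proving it is a continuous submetric, and establishing the pushout property---is already carried out in \cref{p:itisametric}. The only genuine input beyond invoking that proposition is the observation that $\gamma$ restricted to the block $B \times B$ coincides with $d_B$ (part~(1)), together with the separatedness of $B$; these two facts are exactly what force both defining properties of an embedding. In short, the corollary is an immediate consequence of the explicit formula for $\gamma$ on the $B \times B$ component.
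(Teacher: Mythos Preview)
Your argument is correct and is exactly the intended one: the paper gives no explicit proof, treating the corollary as immediate from \cref{p:itisametric}, and you have spelled out precisely the details---that part~(1) of the proposition forces $\gamma$ to agree with $d_B$ on the $B\times B$ block, whence the pushout leg $B \to (B+X)/{\sim_\gamma}$ is metric-preserving, and injectivity then follows from separatedness of $B$.
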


We specialise \cref{p:itisametric} to the case of a pushout of two embeddings.

\begin{corollary}[of \cref{p:itisametric}]\label{l:pushout-mono}
  Consider embeddings $f_0\colon X\rmono Y_0$ and $f_1\colon X\rmono Y_1$ in $\SMKH$ and their pushout as displayed below.
	\[
	\begin{tikzcd}
		X \arrow[hookrightarrow]{r}{f_1}	\arrow[hookrightarrow,swap]{d}{f_0}										& Y_1	\arrow[hook]{d}{\lambda_1} \\
		Y_0  \arrow[swap, hook]{r}{\lambda_0}	& P		\arrow[ul, phantom, "\ulcorner", very near start]
	\end{tikzcd}
	\]
	For all $i,j\in \{0,1\}$, $u \in Y_i$ and $v \in Y_j$,
	\[
	d_P(\lambda_i(u), \lambda_j(v)) = \begin{cases}
		d_{Y_i}(u,v) & \text{if }i = j,\\
		\inf_{x \in X}\big(d_{Y_i}(u,f_i(x)) + d_{Y_{j}}(f_{j}(x),v)\big) & \text{if }i \neq j.
	\end{cases}
	\]
\end{corollary}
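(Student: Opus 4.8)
The plan is to obtain the formula as a direct instance of \cref{p:itisametric}, simplifying the single clause that is not yet in the desired shape. Concretely, I would apply \cref{p:itisametric} with $A \coloneqq X$, taking its embedding $i \colon A \rmono X$ to be $f_1 \colon X \rmono Y_1$ and its morphism $f \colon A \to B$ to be $f_0 \colon X \to Y_0$ (so $B = Y_0$ and the coproduct $B + X$ of the proposition is $Y_0 + Y_1$). Then the pushout is $P = (Y_0 + Y_1)/{\sim_\gamma}$ for the continuous submetric $\gamma$ described there, and by \cref{r:sym} its metric is $d_P([z],[w]) = \gamma(z,w)$. Since $\lambda_0$ and $\lambda_1$ are the composites of the coproduct inclusions with the quotient projection, each value $d_P(\lambda_i(u),\lambda_j(v))$ is computed by evaluating $\gamma$ on the appropriate inclusions of $u$ and $v$.

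Three of the four cases then follow by simply reading off the corresponding clause of \cref{p:itisametric}. For $i = j = 0$ the $Y_0 \times Y_0$ clause gives $d_P(\lambda_0(u),\lambda_0(v)) = d_{Y_0}(u,v)$, and the two mixed clauses give precisely the claimed infima for $i \neq j$, after renaming the bound variable from $a$ to $x$. These cases use only that $f_1$ is an embedding (to invoke the proposition) and that the morphisms are non-expansive.

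The only case requiring an argument is $i = j = 1$, where \cref{p:itisametric} yields not $d_{Y_1}(u,v)$ directly but the minimum
\[
\min\mleft\{ d_{Y_1}(u,v),\ \inf_{a,a' \in X}\big( d_{Y_1}(u,f_1(a)) + d_{Y_0}(f_0(a),f_0(a')) + d_{Y_1}(f_1(a'),v) \big) \mright\}.
\]
I must show that this minimum equals $d_{Y_1}(u,v)$, i.e.\ that every term in the infimum is at least $d_{Y_1}(u,v)$. This is the crux, and it is exactly where I use that $f_0$ --- the map playing the role of the non-embedding morphism in the proposition --- is in fact an embedding: this gives $d_{Y_0}(f_0(a),f_0(a')) = d_X(a,a')$, while non-expansiveness of $f_1$ gives $d_X(a,a') \geq d_{Y_1}(f_1(a),f_1(a'))$; the triangle inequality in $Y_1$ then yields $d_{Y_1}(u,f_1(a)) + d_{Y_1}(f_1(a),f_1(a')) + d_{Y_1}(f_1(a'),v) \geq d_{Y_1}(u,v)$. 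Hence the infimum is bounded below by $d_{Y_1}(u,v)$, so the minimum collapses to $d_{Y_1}(u,v)$, completing the final case. By symmetry one could instead take $f_0$ as the embedding and $f_1$ as the morphism, which would make the $i = j = 0$ case the nontrivial one; either instantiation uses both embedding hypotheses, and the two diagonal cases together are exactly the statement that $\lambda_0$ and $\lambda_1$ are distance-preserving, consistent with their being embeddings by \cref{c:pushout-of-emb-is-emb}.
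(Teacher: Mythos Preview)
Your proof is correct and follows exactly the approach the paper intends: the corollary is stated without proof in the paper, merely as a specialisation of \cref{p:itisametric}. You have correctly identified that the only non-immediate step is collapsing the $i=j=1$ clause, and your argument there (using that $f_0$ is an embedding to replace $d_{Y_0}(f_0(a),f_0(a'))$ by $d_X(a,a')$, then non-expansiveness of $f_1$ and the triangle inequality in $Y_1$) is exactly what is needed.
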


It is known that, in a regular category, any pullback square consisting entirely of regular epimorphisms is also a pushout square. This follows from (the dual of) the main result of \cite{Ringel1972} (cf.\ also \cite[§1.565]{FreydScedrov1990} or \cite[Remark 5.3]{CarboniKellyEtAl1993}).
Since we are aiming to show that $\SMKH$ is coregular (and that embeddings coincide with regular monomorphisms), the following result should not come as a surprise.

\begin{lemma} \label{l:pushout-of-emb-is-pullback}
	A pushout square in $\SMKH$ consisting entirely of embeddings is also a pullback.
\end{lemma}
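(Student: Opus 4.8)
The plan is to show that the canonical comparison morphism from $X$ into the pullback of $\lambda_0$ and $\lambda_1$ is an isomorphism. By \Cref{d:thm:1}, the pullback $Q$ of $\lambda_0 \colon Y_0 \to P$ and $\lambda_1 \colon Y_1 \to P$ is computed as the pullback in $\CompHaus$, namely $Q = \{(u,v) \in Y_0 \times Y_1 \mid \lambda_0(u) = \lambda_1(v)\}$, equipped with the sup-metric $d_Q((u,v),(u',v')) = \max\big(d_{Y_0}(u,u'),\, d_{Y_1}(v,v')\big)$. Since the square commutes, there is a unique comparison morphism $c \colon X \to Q$, $x \mapsto (f_0(x), f_1(x))$. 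As $f_0$ is injective, $c$ is injective; and since $f_0, f_1$ are embeddings, $d_{Y_0}(f_0(x),f_0(x')) = d_X(x,x') = d_{Y_1}(f_1(x),f_1(x'))$, so $d_Q(c(x),c(x')) = d_X(x,x')$, i.e.\ $c$ is an isometry onto its image and the metric compatibility is automatic. It therefore remains to prove that $c$ is surjective.

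\textbf{Surjectivity, the heart of the argument.}
Fix $(u,v) \in Q$, so $\lambda_0(u) = \lambda_1(v)$. Since $P$ is separated, this equality is equivalent to $d_P(\lambda_0(u),\lambda_1(v)) = 0$ and $d_P(\lambda_1(v),\lambda_0(u)) = 0$. Applying the $i \neq j$ case of \Cref{l:pushout-mono}, these read
\[
\inf_{x \in X}\big(d_{Y_0}(u,f_0(x)) + d_{Y_1}(f_1(x),v)\big) = 0
\quad\text{and}\quad
\inf_{x \in X}\big(d_{Y_1}(v,f_1(x)) + d_{Y_0}(f_0(x),u)\big) = 0.
\]
Each integrand is continuous from the compact space $X$ into $[0,\infty]$ with the upper topology, so its image is a non-empty compact subset of $[0,\infty]$ and hence attains its smallest element. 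Thus the two infima are realised at points $x_0, x_1 \in X$, giving $d_{Y_0}(u,f_0(x_0)) = d_{Y_1}(f_1(x_0),v) = 0$ and $d_{Y_0}(f_0(x_1),u) = d_{Y_1}(v,f_1(x_1)) = 0$. Using that $f_0, f_1$ are embeddings and the triangle inequality,
\[
d_X(x_1,x_0) = d_{Y_0}(f_0(x_1),f_0(x_0)) \leq d_{Y_0}(f_0(x_1),u) + d_{Y_0}(u,f_0(x_0)) = 0,
\]
and symmetrically $d_X(x_0,x_1) = d_{Y_1}(f_1(x_0),f_1(x_1)) \leq d_{Y_1}(f_1(x_0),v) + d_{Y_1}(v,f_1(x_1)) = 0$. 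Separatedness of $X$ forces $x_0 = x_1 =: x$. Then $d_{Y_0}(u,f_0(x)) = d_{Y_0}(f_0(x),u) = 0$ and $d_{Y_1}(v,f_1(x)) = d_{Y_1}(f_1(x),v) = 0$, so separatedness of $Y_0$ and $Y_1$ yields $u = f_0(x)$ and $v = f_1(x)$, i.e.\ $c(x) = (u,v)$.

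\textbf{Conclusion and main obstacle.}
Thus $c$ is a continuous bijection between compact Hausdorff spaces, hence a homeomorphism, and an isometry by the first paragraph; therefore $c$ is an isomorphism in $\SMKH$, and the original pushout square is a pullback. The routine parts (identifying the pullback as a sup-metric subspace, injectivity, metric compatibility) are immediate from \Cref{d:thm:1} and the embedding hypothesis. The step I expect to require the most care is the surjectivity argument: it hinges on converting the identification $\lambda_0(u) = \lambda_1(v)$ into the two vanishing infima via \Cref{l:pushout-mono}, on the attainment of those infima (which uses compactness together with lower semicontinuity in the form that non-empty compact subsets of $[0,\infty]$ have least elements), and crucially on combining the two distinct witnesses $x_0, x_1$ into a single point using separatedness — an asymmetry that must be handled with the correct ordering of arguments in each non-symmetric metric.
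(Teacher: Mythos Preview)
Your proof is correct and follows essentially the same approach as the paper: both arguments use \Cref{l:pushout-mono} to translate $\lambda_0(u)=\lambda_1(v)$ into two vanishing infima, invoke compactness (via least elements of compact subsets of $[0,\infty]$) to realise these infima at witnesses $x_0,x_1$, and then use separatedness to merge $x_0=x_1$ and conclude $u=f_0(x)$, $v=f_1(x)$. The only difference is cosmetic: you set up the comparison map to the pullback $Q$ explicitly and verify it is an isometric bijection, whereas the paper checks the set-theoretic pullback condition directly and then remarks that the square is a pullback in $\SMKH$ ``since all maps are embeddings''.
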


\begin{proof}
	Let us give names to the morphisms in the pushout square.
	\[
	\begin{tikzcd}
		X \arrow[hookrightarrow]{r}{f_1}	  \arrow[hookrightarrow,swap]{d}{f_0}& Y_1	\arrow[hookrightarrow]{d}{\lambda_1} \\
		Y_0 \arrow[swap,hookrightarrow]{r}{\lambda_0} & P		\arrow[ul, phantom, "\ulcorner", very near start]
	\end{tikzcd}
	\]
	Assume we have $y_0 \in Y_0$ and $y_1 \in Y_1$ with $\lambda_0(y_0) = \lambda_1(y_1)$. We prove that there is (a necessarily unique) $x \in X$ such that $y_0 = f_0(x)$ and $y_1 = f_1(x)$.
	By \cref{l:pushout-mono},
	\[
		0 = d_P(\lambda_0(y_0), \lambda_1(y_1)) = \inf_{x \in X} \big(d_{Y_0}(y_0, f_0(x)) + d_{Y_1}(f_1(x),y_1) \big).
	\]
	The set
	\[
		C \coloneqq \{d_{Y_0}(y_0, f_0(x)) + d_{Y_1}(f_1(x), y_1) \mid x \in X\}
	\]
	is compact in $[0,\infty]$ with respect to the upper topology because it is the image of the compact space $X$ under the continuous function
	\begin{align*}
		X & \longrightarrow [0, \infty]\\
		x & \longmapsto d_{Y_0}(y_0, f_0(x)) + d_{Y_1}(f_1(x), y_1).
	\end{align*}
	Since $C$ is compact and $\inf C = 0$, we have $0 \in C$, and hence there is $x \in X$ such that $d_{Y_0}(y_0, f_0(x)) + d_{Y_1}(f_1(x), y_1) = 0$, and hence $d_{Y_0}(y_0, f_0(x)) = d_{Y_1}(f_1(x), y_1) = 0$.
	Similarly, there is $x' \in X$ such that $d_{Y_1}(y_1, f_1(x')) = d_{Y_0}(f_0(x'), y_0) = 0$.
	Therefore,
	\[
		d_{Y_0}(f_0(x'), f_0(x)) \leq  d_{Y_0}(f_0(x'), y_0) + d_{Y_0}(y_0, f_0(x)) = 0,
	\]
	and hence $d_X(x', x) = 0$ since $f_0$ is an embedding.
	Similarly, one shows $d_X(x, x') = 0$.
	Therefore, since $X$ is separated, $x = x'$.
	Thus, we have also $d_{Y_1}(y_1, f_1(x)) = 0$ and $d_{Y_0} (f_0(x), y_0) = 0$, and by separation we get $y_0 = f_0(x)$ and $y_1 = f_1(x)$.
	So, the square is a pullback in $\Set$, and in $\SMKH$, too, since all maps are embeddings.
\end{proof}

\begin{proposition} \label{p:rmono=emb}
	A morphism in $\SMKH$ is a regular monomorphism if and only if it is an embedding.
\end{proposition}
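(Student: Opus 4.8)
The plan is to prove both implications using the orthogonal factorisation system of \Cref{d:thm:2} together with the two facts established above: that pushouts of embeddings are embeddings (\Cref{c:pushout-of-emb-is-emb}) and that a pushout square consisting entirely of embeddings is also a pullback (\Cref{l:pushout-of-emb-is-pullback}).

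For the implication that every embedding is a regular monomorphism, I would realise an embedding $i \colon A \rmono X$ as the equalizer of its own cokernel pair. Concretely, form the pushout of $i$ along itself, obtaining $\lambda_0, \lambda_1 \colon X \to P$ with $\lambda_0 i = \lambda_1 i$; this exists since $\SMKH$ is cocomplete. By \Cref{c:pushout-of-emb-is-emb} both $\lambda_0$ and $\lambda_1$ are embeddings, so the square consists entirely of embeddings and, by \Cref{l:pushout-of-emb-is-pullback}, is also a pullback. Specialising the pullback property to the diagonal (i.e.\ to pairs $(x,x)$) shows that $\{x \in X \mid \lambda_0(x) = \lambda_1(x)\}$ is exactly $i[A]$. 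Since equalizers in $\SMKH$ are computed as in $\CompHaus$ equipped with the sup-metric (\Cref{d:thm:1}), and since $\lambda_0$ is non-expansive, the equalizer of $\lambda_0, \lambda_1$ carries the subspace metric of $X$; hence it is the embedding of $i[A]$ into $X$, which is isomorphic to $i$ over $X$. Therefore $i$ is the equalizer of $\lambda_0, \lambda_1$, i.e.\ a regular monomorphism.

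For the converse, let $m \colon A \to X$ be a regular monomorphism, say $m = \operatorname{eq}(f, g)$ for some $f, g \colon X \rightrightarrows Z$, and factor it as $m = i \circ e$ with $e \colon A \epi B$ surjective and $i \colon B \rmono X$ an embedding (\Cref{d:thm:2}). I would show that $e$ is an isomorphism. Since $e$ is surjective it is an epimorphism, and from $f i e = f m = g m = g i e$ we may cancel $e$ to obtain $f i = g i$; thus $i$ equalizes $f$ and $g$, so the universal property of $m = \operatorname{eq}(f,g)$ yields $h \colon B \to A$ with $m h = i$. Using that both $m$ and $i$ are monomorphisms, the identities $m (h e) = i e = m$ and $i (e h) = m h = i$ give $h e = \mathrm{id}_A$ and $e h = \mathrm{id}_B$, so $e$ is an isomorphism and $m \cong i$ is an embedding.

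The only genuinely substantive input is the pullback-of-embeddings lemma invoked in the first implication; once that is available, both directions are short diagram chases. I expect the point requiring the most care to be the metric on the equalizer: I must confirm that the sup-metric defining the limit reduces to the subspace metric of $X$, which follows from the non-expansiveness of $\lambda_0, \lambda_1$, so that the computed equalizer is genuinely an embedding and coincides with $i$ rather than merely sharing its underlying set.
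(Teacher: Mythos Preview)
Your argument is correct in both directions. For the implication ``embedding $\Rightarrow$ regular mono'' you follow exactly the paper's route (cokernel pair, \Cref{c:pushout-of-emb-is-emb}, \Cref{l:pushout-of-emb-is-pullback}); you simply spell out in elementary terms what the paper compresses into the single sentence ``it is also a pullback, hence $i$ is the equalizer of $\lambda_0$ and $\lambda_1$'', and your check that the sup-metric on the equalizer collapses to the subspace metric is correct and worth recording.

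For the converse ``regular mono $\Rightarrow$ embedding'' you take a genuinely different path. The paper observes that the forgetful functors $\SMKH \to \CompHaus$ and $\SMKH \to \Met$ are right adjoints, hence preserve equalizers, so a regular mono in $\SMKH$ is a regular mono in each of these categories and therefore an embedding. Your argument instead uses the (surjection, embedding) factorisation of \Cref{d:thm:2}: factor $m = i \circ e$ and show $e$ is invertible using that $e$ is epi together with the universal property of the equalizer. Both are standard and short; the paper's version imports a fact about regular monos in $\CompHaus$ and $\Met$, while yours stays entirely inside $\SMKH$ and relies only on the factorisation system already established, which is arguably cleaner in this self-contained setting.
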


\begin{proof}
	A regular mono in $\SMKH$ is in particular a regular mono of compact Hausdorff spaces and of metric spaces (since the forgetful functors are right adjoint and hence preserve limits), which implies that it is an embedding.

	Conversely, let $i \colon X \hookrightarrow Y$ be an embedding.
	By \cref{c:pushout-of-emb-is-emb}, the pushout of $i$ along itself consists entirely of embeddings.
	\[
	\begin{tikzcd}
		X \arrow[hookrightarrow]{r}{i}	 \arrow[hookrightarrow,swap]{d}{i}& Y_1	\arrow[hookrightarrow]{d}{\lambda_1} \\
		Y_0\arrow[swap,hookrightarrow]{r}{\lambda_0} & P		\arrow[ul, phantom, "\ulcorner", very near start]
	\end{tikzcd}
	\]
	Thus, by \cref{l:pushout-of-emb-is-pullback} it is also a pullback.
	Hence, $i$ is the equaliser of $\lambda_0$ and $\lambda_1$.
\end{proof}

\begin{proposition} \label{p:epi=surj}
	A morphism in $\SMKH$ is an epimorphism if and only if it is surjective.
\end{proposition}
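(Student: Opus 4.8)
The plan is to prove both implications separately, with the forward direction being routine and the converse relying on the pushout machinery developed above.

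For the easy direction, I would observe that if $f\colon X\to Y$ is surjective and $g,h\colon Y\to Z$ satisfy $g\circ f=h\circ f$, then $g$ and $h$ agree on the image of $f$, which is all of $Y$; hence $g=h$, so $f$ is an epimorphism. This uses only surjectivity of the underlying function.

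For the converse, suppose $f$ is an epimorphism. Using the orthogonal factorisation system of \Cref{d:thm:2}, I would factor $f$ as $X\overset{e}{\epi} I\overset{m}{\rmono}Y$ with $e$ surjective and $m$ an embedding. By the easy direction just established, $e$ is an epimorphism; since $f=m\circ e$ is an epimorphism, the standard cancellation fact (if $m\circ e$ is epi then so is $m$) shows that $m$ is an epimorphism as well. Thus it suffices to prove that an embedding which is an epimorphism is surjective, for then $f=m\circ e$ is a composite of surjections and is itself surjective. This reduction is the key organisational step, since it is what lets me apply the pushout formula of \Cref{l:pushout-mono}, which requires \emph{both} legs of the square to be embeddings.

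To settle the reduced case, let $m\colon A\rmono Y$ be an embedding that is an epimorphism, and form the pushout of $m$ along itself. By \Cref{c:pushout-of-emb-is-emb} all four morphisms of the square are embeddings, so \Cref{l:pushout-mono} supplies an explicit formula for the metric on the pushout $P$. Since $m$ is an epimorphism and the two coprojections $\lambda_0,\lambda_1\colon Y\to P$ satisfy $\lambda_0\circ m=\lambda_1\circ m$, we deduce $\lambda_0=\lambda_1$. Fixing $y\in Y$ and applying the $i\neq j$ case of the formula in \Cref{l:pushout-mono} to $\lambda_0(y)=\lambda_1(y)$, I would obtain
\[
0=d_P(\lambda_0(y),\lambda_1(y))=\inf_{a\in A}\big(d_Y(y,m(a))+d_Y(m(a),y)\big).
\]
The decisive point is then a compactness argument identical to the one in the proof of \Cref{l:pushout-of-emb-is-pullback}: the set $\{d_Y(y,m(a))+d_Y(m(a),y)\mid a\in A\}$ is the continuous image of the compact space $A$ in $[0,\infty]$ with the upper topology, hence compact, so the infimum $0$ is attained. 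This yields some $a\in A$ with $d_Y(y,m(a))=d_Y(m(a),y)=0$, and separation of $Y$ forces $y=m(a)$. Hence $m$ is surjective, and therefore so is $f$. I expect no serious obstacle beyond getting the reduction right; the extraction of an attained minimum from a vanishing infimum is the same device already used above and is entirely routine.
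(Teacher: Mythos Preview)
Your argument is correct. Both your proof and the paper's begin identically: surjections are trivially epic, and for the converse one factors $f$ through the (surjection, embedding) system of \Cref{d:thm:2} and reduces to showing that an embedding $m$ which is epic must be surjective.

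The difference lies in how this last step is handled. The paper simply invokes \Cref{p:rmono=emb}: since $m$ is an embedding it is a \emph{regular} monomorphism, and a regular monomorphism that is also an epimorphism is automatically an isomorphism. You instead form the cokernel pair of $m$, note that $\lambda_0=\lambda_1$ by the epi hypothesis, and then feed this into the explicit metric formula of \Cref{l:pushout-mono} together with the compactness trick from \Cref{l:pushout-of-emb-is-pullback} to extract, for each $y\in Y$, an $a\in A$ with $y=m(a)$. This works and is self-contained, but it is essentially an inlined special case of the machinery that the paper has already packaged into \Cref{p:rmono=emb}; citing that result and the standard ``regular mono $+$ epi $=$ iso'' fact gives a two-line finish. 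One small point worth making explicit in your write-up: the compactness step needs $A\neq\varnothing$, but this is automatic once you have fixed $y\in Y$, since otherwise the formula yields $0=\inf_\varnothing=\infty$.
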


\begin{proof}
  Clearly, every surjection is an epimorphism. For the converse direction, let $f$ be an epimorphism, and consider its (surjection, embedding)-factorisation (\cref{d:thm:2}) $f = i \circ q$.
  By \cref{p:rmono=emb}, $i$ is a regular mono. Since $f$ is an epi, $i$ is also an epi, and therefore an isomorphism. Consequently, $f$ is surjective.
\end{proof}

The following is an immediate consequence of \cref{c:pushout-of-emb-is-emb,p:rmono=emb}.
\begin{corollary} \label{c:pushout-reg-mono-is-reg-mono} In $\SMKH$, the pushout of a regular monomorphism along any morphism is a regular monomorphism.
\end{corollary}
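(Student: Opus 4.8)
The plan is to exploit the fact that the difficult analytic and categorical work has already been carried out in \cref{c:pushout-of-emb-is-emb} and \cref{p:rmono=emb}, so that the proof reduces to chaining these two results together. The strategy is to pass from the intrinsic notion of \emph{regular monomorphism} to the concrete notion of \emph{embedding}, where pushout-stability is already available, and then to pass back.

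Concretely, I would begin with an arbitrary regular monomorphism $m \colon A \to X$ in $\SMKH$ together with an arbitrary morphism $f \colon A \to B$, and form the pushout of $m$ along $f$. The first step is to invoke \cref{p:rmono=emb} to replace the hypothesis ``$m$ is a regular monomorphism'' by the equivalent statement ``$m$ is an embedding.'' The second step is to apply \cref{c:pushout-of-emb-is-emb}, which guarantees that the pushout of the embedding $m$ along $f$ is again an embedding. The final step is to invoke the converse direction of \cref{p:rmono=emb} to conclude that this pushout embedding is in fact a regular monomorphism, which is exactly the assertion to be proved.

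Since every link in this chain is a direct citation, there is no genuine obstacle to overcome here: the whole content of the corollary is packaged inside the characterisation of regular monomorphisms as embeddings (the nontrivial equivalence established via the pushout-is-pullback argument of \cref{l:pushout-of-emb-is-pullback}) and the explicit pushout formula of \cref{p:itisametric} underlying \cref{c:pushout-of-emb-is-emb}. I would therefore present the proof in a single sentence, simply composing the two cited statements, and note that both the identification with embeddings and the stability of embeddings under pushout are precisely what make the passage between the abstract and the concrete descriptions seamless.

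\begin{proof}
	Let $m$ be a regular monomorphism in $\SMKH$. By \cref{p:rmono=emb}, $m$ is an embedding. By \cref{c:pushout-of-emb-is-emb}, the pushout of $m$ along any morphism is again an embedding. Applying \cref{p:rmono=emb} once more, this pushout is a regular monomorphism.
\end{proof}
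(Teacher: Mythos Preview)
Your proof is correct and follows exactly the same approach as the paper, which simply notes that the corollary is an immediate consequence of \cref{c:pushout-of-emb-is-emb} and \cref{p:rmono=emb}.
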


Combining \cref{d:thm:1,d:thm:2,p:epi=surj,p:rmono=emb,c:pushout-reg-mono-is-reg-mono}, we finally obtain the main result of the section.

\black

\begin{theorem} \label{t:regular}
	$\SMKHop$ is a regular category.
\end{theorem}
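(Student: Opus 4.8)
The plan is to check, one by one, the three defining conditions of a regular category recalled in \cref{sec:introduction}, applied to $\SMKHop$; by dualising, each condition becomes a statement about $\SMKH$ that is already available.

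First, for $\SMKHop$ to be finitely complete is the same as for $\SMKH$ to be finitely cocomplete, and indeed \cref{d:thm:1} establishes that $\SMKH$ is complete and cocomplete. Second, the requirement that $\SMKHop$ have pushouts of kernel pairs dualises to the requirement that $\SMKH$ have pullbacks of cokernel pairs; since $\SMKH$ is complete by \cref{d:thm:1}, all pullbacks exist and this condition holds automatically.

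The remaining, genuinely substantive, condition is that regular epimorphisms in $\SMKHop$ be pullback-stable. Here I would use that a regular epimorphism in $\SMKHop$ is exactly a regular monomorphism in $\SMKH$, which by \cref{p:rmono=emb} coincides with an embedding, and that pullbacks in $\SMKHop$ are pushouts in $\SMKH$. Under this translation, pullback-stability of regular epimorphisms in $\SMKHop$ is precisely the statement that, in $\SMKH$, the pushout of a regular monomorphism along an arbitrary morphism is again a regular monomorphism, which is exactly \cref{c:pushout-reg-mono-is-reg-mono}.

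I expect no real obstacle at this point: the entire weight of the argument has already been carried by the explicit pushout formula of \cref{p:itisametric} and its corollary \cref{c:pushout-of-emb-is-emb}, from which \cref{c:pushout-reg-mono-is-reg-mono} follows. The complementary results \cref{d:thm:2,p:epi=surj} are what let one describe the regular epimorphisms of $\SMKHop$ concretely---as surjections of $\SMKH$---and so round out the picture, even though the bare regularity statement uses only the three items above. It therefore remains only to assemble these facts into the verification of the definition.
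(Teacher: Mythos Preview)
Your argument is correct and coincides with the paper's, which simply records that the theorem follows by combining \cref{d:thm:1,d:thm:2,p:epi=surj,p:rmono=emb,c:pushout-reg-mono-is-reg-mono}; you have merely spelled out which result handles which clause of the definition. One small slip in your closing commentary: the regular epimorphisms of $\SMKHop$ are the regular monomorphisms, i.e.\ embeddings, of $\SMKH$ (via \cref{p:rmono=emb}), not the surjections---what \cref{p:epi=surj} describes concretely as surjections are the epimorphisms of $\SMKH$, hence the \emph{monomorphisms} of $\SMKHop$.
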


\section{Barr-coexactness for separated metric compact Hausdorff spaces}
\label{sec:barr-coex-separ}

In this section we show that the dual of the category \(\SMKH\) is Barr-exact.

We refer to \cite{Bor94a} for the notion of an internal (equivalence) relation. We recall that an equivalence relation is \emph{effective} if it is a kernel pair. Equivalently (in a regular category with coequaliser) if it is the kernel pair of its coequaliser.
Moreover, a \emph{Barr-exact category} is a regular category where every equivalence relation is effective.

In this section we provide a description of equivalence relations in the category $\SMKHop$, and we exploit it to prove that $\SMKHop$ is Barr-exact.

\begin{notation}
	Given morphisms $f_0\colon X \to Y_0$ and $f_1\colon X \to Y_1$, the unique morphism induced by the universal property of the product is denoted by $\langle f_0, f_1\rangle\colon X\to Y_0\times Y_1$.
	Similarly, given morphisms $f_0\colon X_0 \to Y$ and $f_1\colon X_1 \to Y$, the coproduct map is denoted by $\binom{g_0}{g_1}\colon X_0+X_1\to Y$.
\end{notation}

Dualising the definition of an internal binary relation, given a separated metric compact Hausdorff space $X$, we call a \emph{binary corelation} on $X$ a quotient object $\binom{q_0}{q_1}\colon X+ X \epi S$ of the separated metric compact Hausdorff space $X+X$.
We recall from \cref{d:thm:1} that $X + X$ is the disjoint union of two copies of $X$ equipped with the coproduct topology and metric.
A binary corelation on $X$ is called respectively \emph{reflexive}, \emph{symmetric}, \emph{transitive} provided it satisfies the properties:\\
\begin{minipage}[t]{0.49\textwidth}
	\begin{figure}[H]
		\centering
		\begin{tikzcd}
			{} 							& X+X \arrow[two heads,swap]{dl}{\binom{q_0}{q_1}} \arrow[two heads]{dr}{\binom{1_X}{1_X}}	& \\
			S\arrow[dashed]{rr}{\exists d} 	& 																															& X
		\end{tikzcd}
		{\vspace{-5pt}\caption*{reflexivity}}
	\end{figure}
\end{minipage}
\begin{minipage}[t]{0.49\textwidth}
	\begin{figure}[H]
		\centering
		\begin{tikzcd}
			& X+X \arrow[swap, two heads]{dl}{\binom{q_0}{q_1}} \arrow[two heads]{dr}{\binom{q_1}{q_0}}	& \\
			S \arrow[dashed]{rr}{\exists s}	& 																															& S
		\end{tikzcd}
		{\vspace{-5pt}\caption*{symmetry}}
	\end{figure}
\end{minipage}
\begin{figure}[H]
	\centering
	\begin{tikzcd}
		X \arrow{r}{q_1} \arrow[swap]{d}{q_0}	& S \arrow{d}{\lambda_1} \\
		S \arrow[swap]{r}{\lambda_0}			 	& P \arrow[ul, phantom, "\ulcorner", very near start]
	\end{tikzcd}
	\ \ \ \ $\Longrightarrow$ \ \ \ \
	\begin{tikzcd}
		& X+X \arrow[swap, two heads]{dl}{\binom{q_0}{q_1}} \arrow{dr}{\binom{\lambda_0\circ q_0}{\lambda_1\circ q_1}}	& \\
		S\arrow[dashed]{rr}{\exists t}	& 																																					& P
	\end{tikzcd}
	{\vspace{-5pt}\caption*{transitivity}}
\end{figure}%
\noindent An \emph{equivalence corelation} is a reflexive symmetric transitive binary corelation.
The key observation is that, since quotient objects of $X + X$ are in bijection with certain metrics on $X + X$, equivalence corelations are more manageable than their duals.

\begin{definition}
	Let $X$ be a separated metric compact Hausdorff space.
	A \emph{binary continuous submetric} on $X$ is an element of $\Subm(X + X)$, i.e.\ a continuous metric on $X+X$ which is below the coproduct metric on $X+X$.
	Furthermore, we say it is \emph{reflexive} (resp.\ \emph{symmetric}, \emph{transitive}, \emph{equivalence}) if the corresponding binary corelation on $X$ (under the correspondence with $\QuotEquiv(X+X)$ given by \Cref{d:prop:2}) is reflexive (resp.\ symmetric, transitive, equivalence).
\end{definition}

\begin{notation}
	We denote the elements of $X+X$ by $(x,i)$, where $x$ varies in $X$ and $i$ varies in $\{0,1\}$.
	Furthermore, $i^*$ stands for $1-i$; for example, $(x,1^*)=(x,0)$.
\end{notation}

As we will prove, every equivalence continuous submetric $\gamma$ on a separated metric compact Hausdorff space $X$ is obtained as follows: consider a closed subset $Y$ of $X$ and let $\gamma$ be the greatest metric on $X + X$ that extends the coproduct metric of $X + X$ and that satisfies $d((y,0), (y,1)) = \gamma((y,1),(y,0)) = 0$ for every $y \in Y$.

\begin{lemma}\label{l:refl}
	A binary continuous submetric $\gamma$ on a separated metric compact Hausdorff space $X$ is reflexive if and only if, for all $x, y \in X$ and $i, j \in \{0,1\}$,
	\begin{equation*}
		d_X(x, y) \leq \gamma((x,i), (y,j)).
	\end{equation*}
\end{lemma}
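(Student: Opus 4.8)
Let me understand what needs to be proven. We have $X$ a separated metric compact Hausdorff space, and $\gamma$ a binary continuous submetric on $X$ (i.e., an element of $\mathbf{S}(X+X)$). The correspondence from Theorem \ref{d:prop:2} associates to $\gamma$ a surjective morphism $\binom{q_0}{q_1}\colon X+X \epi S$ where $S = (X+X)/\sim_\gamma$. Reflexivity means this quotient object sits above $\binom{1_X}{1_X}\colon X+X \epi X$ in the preorder on $\mathbf{Q}(X+X)$.

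Let me decode these diagrams. The morphism $\binom{1_X}{1_X}\colon X+X \to X$ sends both copies of $X$ identically to $X$. Its kernel metric (by Definition \ref{n:les-f}) is $\kappa((x,i),(y,j)) = d_X(x,y)$ regardless of $i,j$. Reflexivity of the corelation says there exists $d\colon S \to X$ making $\binom{q_0}{q_1}$ factor through... wait, the diagram shows $\binom{1_X}{1_X}$ factoring through $\binom{q_0}{q_1}$. So reflexivity means $\binom{q_0}{q_1} \leq \binom{1_X}{1_X}$ in the preorder $\mathbf{Q}(X+X)$. Let me re-read.

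The reflexivity diagram: $X+X$ at top, $\binom{q_0}{q_1}$ going down-left to $S$, $\binom{1_X}{1_X}$ going down-right to $X$, and a dashed $\exists d\colon S \to X$ at the bottom. So the dashed map goes $S \to X$, meaning $d \circ \binom{q_0}{q_1} = \binom{1_X}{1_X}$. This says $\binom{1_X}{1_X}$ factors through $\binom{q_0}{q_1}$, i.e., $\binom{q_0}{q_1} \leq \binom{1_X}{1_X}$ in $\mathbf{Q}(X+X)$ (using $f \leq g$ iff $g = h\circ f$ for some $h$).

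**The translation to submetrics.**

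By Theorem \ref{d:prop:2}, the kernel-metric functor gives a dual equivalence, so $\QuotEquiv(X+X) \cong \mathbf{S}(X+X)^{\mathrm{op}}$. The corelation $\binom{q_0}{q_1}$ corresponds to $\gamma$, and $\binom{1_X}{1_X}$ corresponds to its kernel metric $\kappa_{\binom{1_X}{1_X}}$, which I computed as $\kappa((x,i),(y,j)) = d_X(x,y)$. Under the duality, $\binom{q_0}{q_1} \leq \binom{1_X}{1_X}$ translates to the reverse inequality on kernel metrics: $\kappa_{\binom{1_X}{1_X}} \leq \gamma$. That is precisely the claimed inequality $d_X(x,y) \leq \gamma((x,i),(y,j))$.

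**My proof plan.**

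\begin{itemize}
\item First I would compute explicitly the kernel metric of $\binom{1_X}{1_X}$: since this morphism sends $(x,i) \mapsto x$, Definition \ref{n:les-f} gives $\kappa_{\binom{1_X}{1_X}}((x,i),(y,j)) = d_X(x,y)$ for all $i,j$.

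\item Next I would invoke the dual equivalence of Theorem \ref{d:prop:2}. The key fact there is that the assignment $f \mapsto \kappa_f$ is order-reversing and gives a dual isomorphism between $\QuotEquiv(X+X)$ and $\mathbf{S}(X+X)$. Therefore the order relation $\binom{q_0}{q_1} \leq \binom{1_X}{1_X}$ in $\mathbf{Q}(X+X)$ is equivalent to $\kappa_{\binom{1_X}{1_X}} \leq \kappa_{\binom{q_0}{q_1}} = \gamma$ in $\mathbf{S}(X+X)$.

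\item Finally I would identify reflexivity of the corelation with the order relation $\binom{q_0}{q_1} \leq \binom{1_X}{1_X}$. This requires reading off the reflexivity diagram correctly: the dashed arrow $d\colon S \to X$ witnesses that $\binom{1_X}{1_X}$ factors through $\binom{q_0}{q_1}$, which is exactly $\binom{q_0}{q_1} \leq \binom{1_X}{1_X}$ by the definition of the preorder on $\mathbf{Q}(X+X)$ given in Section \ref{s:quotient-objects}.
\end{itemize}

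Chaining these three equivalences gives: reflexivity $\iff \binom{q_0}{q_1} \leq \binom{1_X}{1_X} \iff \kappa_{\binom{1_X}{1_X}} \leq \gamma \iff d_X(x,y) \leq \gamma((x,i),(y,j))$ for all $x,y,i,j$.

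**Where the work lies.**

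This lemma is essentially a bookkeeping translation, so I do not expect a genuine obstacle. The only point demanding care is getting the direction of the inequality right: the duality in Theorem \ref{d:prop:2} is \emph{order-reversing}, so a ``$\leq$'' among quotient objects becomes a ``$\geq$'' among submetrics, and one must check that the reflexivity diagram indeed encodes $\binom{q_0}{q_1} \leq \binom{1_X}{1_X}$ rather than the opposite. Once the kernel metric of $\binom{1_X}{1_X}$ is computed as $d_X(x,y)$, everything follows formally from Theorem \ref{d:prop:2}; no new analysis on $[0,\infty]$ or compactness is needed here, in contrast to the pushout computations of the previous section.
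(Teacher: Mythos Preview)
Your proposal is correct and follows essentially the same approach as the paper: identify reflexivity with the inequality $\binom{q_0}{q_1} \leq \binom{1_X}{1_X}$ in $\Quot(X+X)$, translate via the dual equivalence of \cref{d:prop:2} to $\kappa_{\binom{1_X}{1_X}} \leq \gamma$, and compute $\kappa_{\binom{1_X}{1_X}}((x,i),(y,j)) = d_X(x,y)$.
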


\begin{proof}
	Let $\binom{q_0}{q_1}\colon X+X \epi S$ be the binary corelation associated with $\gamma$.
	By the definition of a reflexive continuous submetric, $\gamma$ is reflexive if and only if $\binom{q_0}{q_1}\colon X+X \epi S$ is below $\binom{1_X}{1_X}\colon X+X \epi X$ in the poset $\QuotEquiv(X+X)$.
	By \cref{d:prop:2}, this is equivalent to ${\kappa_{\binom{1_X}{1_X}}} \leq \gamma$.
	Then, it suffices to note that, for all $(x,i),(y,j)\in X+X$,
	\[
	\kappa_{\binom{1_X}{1_X}} ((x,i),(y,j)) = d(x, y). \qedhere
	\]
\end{proof}

\begin{remark} \label{r:reflexive-is-emb}
	Note that any reflexive continuous submetric $\gamma$ on $X$ satisfies, for all $x, y\in X$ and $i \in \{0,1\}$,
	\begin{equation*} \label{e:refl}
		\gamma((x, i), (y, i)) = d(x, y).
	\end{equation*}
	Indeed, the inequality $\geq$ follows from \cref{l:refl}, while the inequality $\leq$ holds because $\gamma$ is below the coproduct metric of $X+X$.
\end{remark}

\begin{lemma}\label{l:symm}
	A binary continuous submetric $\gamma$ on a separated metric compact Hausdorff space $X$ is symmetric if and only if, for all $x, y \in X$ and $i, j \in \{0,1\}$,
	\begin{equation*}
		\gamma((x,i), (y,j)) = \gamma((x,i^*),(y,j^*)).
	\end{equation*}
\end{lemma}
\begin{proof}
	Let $\binom{q_0}{q_1}\colon X+X \epi S$ be the binary corelation associated with $\gamma$.
	By the definition of a symmetric continuous submetric, $\gamma$ is symmetric if and only if $\binom{q_0}{q_1}\colon X+X \epi S$ is below $\binom{q_1}{q_0}\colon X+X \epi S$ in $\QuotEquiv(X+X)$.
	By \cref{d:prop:2}, this happens exactly when ${\kappa_{\binom{q_1}{q_0}}} \leq \gamma$.
	For all $(x,i),(y,j)\in X+X$, we have
	\[
	\kappa_{\binom{q_1}{q_0}}((x,i),(y,j)) = \gamma((x,i^*), (y,j^*)).
	\]
	Therefore, the binary continuous submetric $\gamma$ is symmetric if and only if, for all $x, y \in X$ and $i, j \in \{0,1\}$, $\gamma((x,i),(y,j)) \leq \gamma((x,i^*), (y,j^*))$.
	Now use the fact that the statement with $\leq$ is equivalent to the statement with $=$ (since $i^{**} = i$).
\end{proof}

\begin{lemma}\label{l:transitive}
	A reflexive continuous submetric $\gamma$ on a separated metric compact Hausdorff space $X$ is transitive if and only if, for all $x,y \in X$ and $i \in \{0,1\}$,
	\[
	\gamma((x,i),(y,i^*)) = \inf_{z \in X} \big(\gamma((x,i), (z,i^*)) + \gamma((z,i),(y,i^*))\big).
	\]
\end{lemma}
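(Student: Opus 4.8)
The plan is to turn transitivity into a statement about kernel metrics and then read the displayed formula off from the metric of the relevant pushout. First I would observe that, for any reflexive $\gamma$, the inequality $\leq$ in the displayed equation holds automatically, so that only $\geq$ carries content. Indeed, fixing $i$ and $z\in X$, the triangle inequality for $\gamma$ gives $\gamma((x,i),(y,i^*))\leq \gamma((x,i),(z,i^*))+\gamma((z,i^*),(y,i^*))$; by \cref{r:reflexive-is-emb} the last term equals $d_X(z,y)$, and by \cref{l:refl} we have $d_X(z,y)\leq \gamma((z,i),(y,i^*))$, so $\gamma((x,i),(y,i^*))\leq \gamma((x,i),(z,i^*))+\gamma((z,i),(y,i^*))$; taking the infimum over $z$ settles the $\leq$ direction, and it remains to prove that transitivity is equivalent to the reverse inequality.

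Next I would make transitivity metric. By definition it asks that the comparison morphism $g\coloneqq\binom{\lambda_0 q_0}{\lambda_1 q_1}\colon X+X\to P$ into the pushout $P$ factor through $p_\gamma\coloneqq\binom{q_0}{q_1}$; exactly as in the proofs of \cref{l:refl,l:symm}, such a factorisation exists if and only if $\kappa_g\leq\gamma$ (the kernel-metric correspondence of \cref{d:prop:2}, applied after splitting off the surjective part of $g$, which does not change the kernel metric). Geometrically $P$ is obtained by gluing two copies of $S$ along $X$, so it carries three copies of $X$; the morphism $g$ sends the two summands of $X+X$ to the two outer copies, while the identified copy sits in the middle. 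On the two diagonal blocks $\kappa_g$ equals $\gamma$ by \cref{r:reflexive-is-emb}, so those inequalities are automatic, and $\kappa_g\leq\gamma$ reduces to the two cross conditions $d_P(g(x,i),g(y,i^*))\leq \gamma((x,i),(y,i^*))$, one for each $i\in\{0,1\}$ --- precisely the two cases of the lemma.

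It then remains to compute the distance in $P$ between the two outer copies, and I expect it to be realised by a single crossing through the middle copy, i.e.
\[ d_P(g(x,i),g(y,i^*))=\inf_{z\in X}\big(\gamma((x,i),(z,i^*))+\gamma((z,i),(y,i^*))\big). \]
Granting this, $\kappa_g\leq\gamma$ is exactly the $\geq$ inequality of the displayed formula, and combined with the automatic $\leq$ it yields the claimed equality in both directions. The main obstacle is establishing this pushout formula. The inequality $\leq$ is the easy two-step path estimate (step from the first outer copy into the middle at cost $\gamma((x,i),(z,i^*))$, then from the middle into the second outer copy at cost $\gamma((z,i),(y,i^*))$). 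The inequality $\geq$ is the real work: one must show that no longer zig-zag across the three copies can beat a single crossing. Since $q_0,q_1$ are not embeddings, \cref{p:itisametric,l:pushout-mono} do not apply directly; instead I would write the right-hand side (together with the values forced on the diagonal blocks) as an explicit candidate submetric on the three-copy space and verify, by the same triangle-inequality bookkeeping as in \cref{p:itisametric} together with reflexivity, that it is a continuous metric below the single-step constraints. Being below these constraints, this candidate is dominated by the pushout metric, while the two-step estimate bounds the pushout metric above by it; hence the two agree on the outer copies, as required. Continuity is obtained as in \cref{p:itisametric} from compactness of $X$ and continuity of $\inf$ for the upper topology.
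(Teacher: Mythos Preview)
Your overall strategy---reduce transitivity to the inequality \(\kappa_g\leq\gamma\) via \cref{d:prop:2}, then compute the pushout metric---is exactly the paper's. Your explicit verification of the \(\leq\) direction is a nice addition and makes explicit what the paper leaves implicit.

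However, your assertion that ``$q_0,q_1$ are not embeddings'' is \emph{false}, and this is precisely the shortcut the paper exploits. Reflexivity means there is a morphism $d\colon S\to X$ with $d\circ q_0=d\circ q_1=1_X$; hence $q_0$ and $q_1$ are split monomorphisms, therefore regular monomorphisms, and by \cref{p:rmono=emb} they are embeddings. Consequently \cref{l:pushout-mono} applies directly to the pushout of $q_0$ along $q_1$, yielding
\[
d_P(\lambda_i[x,i],\lambda_j[y,j])=
\begin{cases}
\gamma((x,i),(y,i)) & i=j,\\
\inf_{z\in X}\big(\gamma((x,i),(z,j))+\gamma((z,i),(y,j))\big) & i\neq j,
\end{cases}
\]
with no further work. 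The ``three-copy'' picture you describe is correct precisely because these maps are embeddings; once you notice that, the hand-built candidate submetric and the triangle-inequality bookkeeping you propose become redundant. Your longer route would succeed if carried out, but it re-proves a special case of \cref{l:pushout-mono} that you already have available.
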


\begin{proof}
	Let $\binom{q_0}{q_1}\colon X+X \epi S$ be the binary corelation associated with $\gamma$.
	To improve readability, we write $[x,i]$ instead of $\binom{q_0}{q_1}(x,i)$.
	By definition of transitivity, the binary continuous submetric $\gamma$ is transitive if and only if, given a pushout square in $\SMKH$ as in the left-hand diagram below,
	there is a morphism $t\colon S\to P$ such that the right-hand diagram commutes.
	\[
	\begin{tikzcd}
		X \arrow[swap]{d}{q_1} \arrow{r}{q_0} & S \arrow{d}{\lambda_1} \\
		S \arrow[swap]{r}{\lambda_0}				& P \arrow[ul, phantom, "\ulcorner", very near start]
	\end{tikzcd}
	\ \ \ \ \ \ \ \
	\begin{tikzcd}
		{} 	& X+X \arrow[swap, two heads]{dl}{\binom{q_0}{q_1}} \arrow{dr}{\binom{\lambda_0\circ q_0}{\lambda_1\circ q_1}}	& \\
		S \arrow[dashed]{rr}{t}	& 		& P
	\end{tikzcd}
	\]
	By \cref{d:prop:2} and \cref{d:rem:3}, such a $t$ exists precisely when
	\[
	\kappa_{\binom{\lambda_0\circ q_0}{\lambda_1\circ q_1}} \leq \kappa_{\binom{q_0}{q_1}},
	\]
	i.e., when, for every $(x,i),(y,j)\in X+X$,
	\[
	d_P\mleft(\binom{\lambda_0\circ q_0}{\lambda_1\circ q_1}(x,i),\binom{\lambda_0\circ q_0}{\lambda_1\circ q_1}(y,j)\mright) \leq \gamma((x,i), (y,j)).
	\]
	The former equals $d_P(\lambda_i([x,i]), \lambda_j([y,j]))$.
	Recall that $\gamma$ is reflexive provided $q_0$ and $q_1$ are both sections of a morphism $d\colon S\to X$.
	In particular, $q_0$ and $q_1$ are regular monomorphisms in $\SMKH$.
	Thus, by \cref{l:pushout-mono},
	\[
		d_P(\lambda_i([x,i]), \lambda_j([y,j])) = \begin{cases}
			\gamma(x,y) & \text{if }i = j,\\
			\inf_{z \in X} \big(\gamma((x,i), (z,j)) + \gamma((z,i), (y, j))\big) & \text{if }i \neq j.
		\end{cases} \qedhere
	\]
\end{proof}

All told, we obtain a characterisation of equivalence continuous submetrics.

\begin{theorem}\label{t:equivalence}
	A binary continuous submetric $\gamma$ on a separated metric compact Hausdorff space $X$ is an equivalence continuous submetric if and only if, for all $x, y \in X$ and $i,j \in \{0,1\}$,
	\[
		d_X(x, y) \leq \gamma((x,i), (y,j)) = \gamma((x,i^*),(y,j^*))
	\]
	and
	\[
	\gamma((x,i),(y,i^*)) = \inf_{z \in X} \big(\gamma((x,i), (z,i^*)) + \gamma((z,i),(y,i^*))\big).
	\]
\end{theorem}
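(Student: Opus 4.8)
The plan is to assemble the statement directly from the three preceding lemmas, which already translate reflexivity, symmetry, and transitivity of the associated binary corelation into pointwise conditions on $\gamma$. By definition, $\gamma$ is an equivalence continuous submetric precisely when it is simultaneously reflexive, symmetric, and transitive, so it suffices to match each conjunct of the claimed characterisation with one of \cref{l:refl,l:symm,l:transitive}.

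First I would observe that the inequality $d_X(x,y) \leq \gamma((x,i),(y,j))$, quantified over all $x,y \in X$ and $i,j \in \{0,1\}$, is exactly the condition for reflexivity supplied by \cref{l:refl}, and that the equality $\gamma((x,i),(y,j)) = \gamma((x,i^*),(y,j^*))$ is precisely the condition for symmetry supplied by \cref{l:symm}. Thus the entire first display of the theorem is equivalent to the conjunction ``reflexive and symmetric''. The second display is, verbatim, the condition appearing in \cref{l:transitive}.

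The one point requiring attention is that \cref{l:transitive} characterises transitivity only under the standing hypothesis that $\gamma$ is reflexive, so the two displays must be handled in the right order. For the forward implication this causes no trouble: an equivalence continuous submetric is in particular reflexive, so \cref{l:transitive} applies and delivers the second display, while \cref{l:refl,l:symm} deliver the first. For the converse I would first invoke the inequality half of the first display together with \cref{l:refl} to conclude that $\gamma$ is reflexive; with reflexivity now in hand, \cref{l:transitive} converts the second display into transitivity, and \cref{l:symm} converts the equality half of the first display into symmetry. Combining, $\gamma$ is reflexive, symmetric, and transitive, hence an equivalence continuous submetric.

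Since every ingredient is already established in the lemmas, there is no genuine obstacle to surmount; the only subtlety is the bookkeeping just described, namely ensuring that reflexivity is extracted from the first display before the transitivity lemma is applied to the second. (It may be worth remarking, for the reader, that the $\leq$ half of the transitivity display is in fact automatic once $\gamma$ is reflexive, by the triangle inequality combined with \cref{l:refl}, so that the genuine content of that display is the reverse inequality; but this is not needed for the present proof, which simply cites \cref{l:transitive} wholesale.)
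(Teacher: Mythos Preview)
Your proposal is correct and matches the paper's own proof, which simply reads ``By \cref{l:refl,l:symm,l:transitive}.'' Your explicit handling of the order-of-application issue (establishing reflexivity from the first display before invoking \cref{l:transitive}) is a welcome clarification of what that one-line citation leaves implicit.
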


\begin{proof}
	By \cref{l:refl,l:symm,l:transitive}.
\end{proof}

\begin{remark}
	$\SMKHop$ is not a Mal'tsev category, since not every reflexive relation is an equivalence relation.
	Indeed, the following is an example of a reflexive non-symmetric binary continuous submetric on a one-point space $\{*\}$ (with $d(*,*) = 0$):
	\[
	\gamma((*,0), (*, 0)) = \gamma((*,0), (*, 1)) = \gamma((*,1), (*, 1)) = 0 \text{ and }
		\gamma((*,1),(*, 0)) = \infty.
	\]
	This corresponds to the surjection $\{(*,0), (*,1)\} \to \{a_1,a_2\}$, $(*,i) \mapsto a_i$, where
	$d(a_i,a_j)$ is $\infty$ if $i= 1$ and $j=0$, and $0$ otherwise.
\end{remark}

Dualising the definition of effective equivalence relations, an equivalence corelation $\binom{q_0}{q_1}\colon X+X \epi S$ on a separated metric compact Hausdorff space $X$ (and so the corresponding equivalence continuous submetric) is \emph{effective} if it coincides with the cokernel pair of its equaliser, i.e.\ if the following is a pushout in $\SMKH$
\begin{equation*}
	\begin{tikzcd}
		A \arrow[hookrightarrow]{r}{i} 	\arrow[hookrightarrow,swap]{d}{i}						& X \arrow{d}{q_1} \\
		X\arrow[swap]{r}{q_0}	& S,
	\end{tikzcd}
\end{equation*}
where $i \colon A \hookrightarrow X$ is the equaliser of $q_0,q_1\colon X\rightrightarrows S$ in $\SMKH$.

\begin{notation}
	For a closed subspace $A$ of a separated metric compact Hausdorff space $X$, we define the map $\gamma^{A} \colon (X+X) \times (X \times X) \to [0, \infty]$ by setting, for $x, y \in X$ and $i \in \{0,1\}$,
	\begin{align*}
	\gamma^A((x,i), (y,i)) & \coloneqq d(x, y),\\
	\gamma^A((x,i), (y,i^*)) & \coloneqq \inf_{a \in A} \big(d(x,a) + d(a,y)\big).
	\end{align*}
\end{notation}

\begin{lemma}\label{l:cokernel-inclusion}
	Let $X$ be a separated metric compact Hausdorff space.
	The binary continuous submetric on $X$ associated with the pushout in $\SMKH$ of an embedding $A \rmono X$ along itself is $\gamma^A$.
\end{lemma}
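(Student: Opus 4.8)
The goal is to compute the kernel metric of the canonical map $X+X \epi P$, where $P$ is the pushout of the embedding $i \colon A \rmono X$ along itself, and to verify that it equals $\gamma^A$. The plan is to invoke \cref{l:pushout-mono}, which gives an explicit formula for the pushout metric $d_P$ in terms of the data of the two legs, and then to read off the four blocks of $\gamma^A$ corresponding to the four pieces of $(X+X)\times(X+X)$.

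More precisely, I would first unwind the relevant definitions. The pushout square has $X$ in the top-left, the two copies of $X$ along the right and bottom, and $P$ at the bottom-right, with both legs out of $A$ being the same embedding $i$. The associated binary corelation is $\binom{q_0}{q_1}\colon X+X \epi P$, where $q_0,q_1$ are the two composites from each copy of $X$ into $P$. Its kernel metric is, by \cref{n:les-f}, the metric $\gamma((x,i),(y,j)) = d_P(q_i(x), q_j(y))$. So everything reduces to evaluating $d_P$ on images of points.

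Next I would apply \cref{l:pushout-mono} with $Y_0 = Y_1 = X$ and $f_0 = f_1 = i$. When the two points sit in the same copy ($i=j$), the lemma yields $d_P(q_i(x),q_i(y)) = d_X(x,y)$, matching the diagonal block $\gamma^A((x,i),(y,i)) = d(x,y)$. When they sit in different copies ($i \neq j$), the lemma gives $d_P(q_i(x),q_j(y)) = \inf_{a \in A}\big(d_X(x,i(a)) + d_X(i(a),y)\big)$; since $i$ is an embedding, $d_X(x,i(a))$ and $d_X(i(a),y)$ are just $d(x,a)$ and $d(a,y)$ in the notation where $A$ is identified with its image, so this matches the off-diagonal block $\gamma^A((x,i),(y,i^*)) = \inf_{a\in A}\big(d(x,a)+d(a,y)\big)$. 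This direct matching of the two cases is essentially the whole proof.

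The only genuine subtlety is the hypothesis of \cref{l:pushout-mono}: that result is stated for a pushout of embeddings, so I must be sure the square in question is indeed a pushout of two embeddings. Here both legs are the single embedding $i\colon A \rmono X$, so the hypothesis holds on the nose, and \cref{c:pushout-of-emb-is-emb} guarantees the induced maps $q_0,q_1$ are themselves embeddings. The formula from \cref{l:pushout-mono} is symmetric in its two arguments when the legs coincide, so no ordering issue arises. I do not expect a real obstacle; the main point is simply bookkeeping to confirm that the abstract $d_P$ formula specialises exactly to the two defining clauses of $\gamma^A$.
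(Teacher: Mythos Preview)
Your proposal is correct and matches the paper's approach exactly: the paper's proof consists of the single sentence ``This is an immediate consequence of \cref{l:pushout-mono}.'' Your write-up simply spells out the bookkeeping that this invocation entails.
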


\begin{proof}
	This is an immediate consequence of \cref{l:pushout-mono}.
\end{proof}

\begin{lemma}\label{l:effective-char}
	Let $\gamma$ be an equivalence continuous submetric on a separated metric compact Hausdorff space $X$, and set
	\[
	A \coloneqq \{a \in X \mid \gamma((a,0), (a,1)) = 0\} = \{ a \in X \mid \gamma((a,1), (a,0)) = 0\}.
	\]
	 Then $\gamma$ is effective if and only if for all $x, y \in X$ and $i \in \{0,1\}$, we have
	\[
	\gamma((x,i), (y,i^*)) = \inf_{a \in A} \big(d_X(x,a) + d_X(a,y)\big).\]
\end{lemma}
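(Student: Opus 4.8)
The set $A$ is defined so that $a\in A$ precisely when $(a,0)$ and $(a,1)$ are identified under the equivalence corelation, i.e.\ when $a$ is ``on the diagonal''; this is exactly the equaliser of $q_0,q_1\colon X\rightrightarrows S$. So my first task is to confirm that $A$ is a closed (hence compact) subspace of $X$ equal to this equaliser, which lets me take its cokernel pair. By \Cref{l:cokernel-inclusion}, the binary continuous submetric associated with the pushout of the embedding $A\rmono X$ along itself is exactly $\gamma^A$. Since effectiveness of $\gamma$ means precisely that $\gamma$ equals the cokernel pair of its equaliser, and since (by \Cref{d:prop:2}) equivalence corelations of $X+X$ are classified by their associated continuous submetrics, effectiveness is equivalent to the equality $\gamma=\gamma^A$. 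Thus the whole lemma reduces to comparing the two submetrics.

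By \Cref{r:reflexive-is-emb} and the definition of $\gamma^A$, the two submetrics already agree on the ``same-copy'' entries, where both equal $d_X(x,y)$. So the content is entirely in the ``mixed'' entries $\gamma((x,i),(y,i^*))$, and the claimed criterion is exactly the assertion that these coincide with $\gamma^A((x,i),(y,i^*))=\inf_{a\in A}(d_X(x,a)+d_X(a,y))$. Note that by symmetry (\Cref{l:symm}) the mixed value is independent of $i$, so I only need to treat one case. Hence the lemma will follow once I show: $\gamma$ is effective $\iff$ $\gamma=\gamma^A$ $\iff$ the displayed mixed-entry equality holds.

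\textbf{The comparison.} I plan to prove the two inequalities between the mixed values separately. For $\gamma\le\gamma^A$: using reflexivity (\Cref{l:refl}, \Cref{r:reflexive-is-emb}) I have $\gamma((x,i),(a,i))=d_X(x,a)$ and $\gamma((a,i^*),(y,i^*))=d_X(a,y)$, while for $a\in A$ the definition of $A$ gives $\gamma((a,i),(a,i^*))=0$; the triangle inequality for $\gamma$ through the two intermediate points $(a,i)$ and $(a,i^*)$ then yields $\gamma((x,i),(y,i^*))\le d_X(x,a)+0+d_X(a,y)$, and taking the infimum over $a\in A$ gives $\gamma\le\gamma^A$ on mixed entries. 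This direction always holds. The reverse inequality $\gamma^A\le\gamma$ on mixed entries is exactly the effectiveness condition and is where the real work lies.

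\textbf{Main obstacle.} The hard direction is showing that effectiveness forces $\gamma^A\le\gamma$, i.e.\ that the value $\gamma((x,i),(y,i^*))$ is \emph{witnessed} by a genuine point $a\in A$ and cannot be strictly smaller than every $d_X(x,a)+d_X(a,y)$. The transitivity characterisation (\Cref{l:transitive}) expresses $\gamma((x,i),(y,i^*))$ as an infimum of two-step mixed paths through arbitrary $z\in X$, but such a $z$ need not lie in $A$; the difficulty is to ``concentrate'' an optimal path so that its turning point lands on the diagonal set $A$. I expect to exploit compactness of $X$ together with lower semicontinuity (continuity for the upper topology) of $\gamma$ to extract a minimising point, and then to use transitivity iteratively to argue that this minimiser must satisfy $\gamma((z,i),(z,i^*))=0$, i.e.\ $z\in A$; this is the step where I anticipate having to be careful, and it is presumably the crux that the author's proof addresses in detail. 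Conversely, I must check that the displayed equality $\gamma=\gamma^A$ indeed makes the cokernel-pair square a pushout, which is immediate from \Cref{l:cokernel-inclusion} together with \Cref{d:prop:2}.
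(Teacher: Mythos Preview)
Your \textbf{Plan} paragraph is already a complete and correct proof of the lemma, and it matches the paper's argument almost line for line: identify $A$ with the equaliser of $q_0,q_1$ (using separation of $S$ and the definition of the kernel metric), invoke \Cref{l:cokernel-inclusion} to see that the cokernel pair of $A\hookrightarrow X$ has associated submetric $\gamma^A$, conclude via \Cref{d:prop:2} that effectiveness is equivalent to $\gamma=\gamma^A$, and then use \Cref{r:reflexive-is-emb} together with the definition of $\gamma^A$ to note that the same-copy entries of $\gamma$ and $\gamma^A$ always agree, so that $\gamma=\gamma^A$ reduces to the displayed mixed-entry equality. That is the whole proof.

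Your \textbf{Comparison} paragraph adds the (correct and pleasant) observation that the inequality $\gamma\le\gamma^A$ on mixed entries holds for \emph{every} equivalence continuous submetric, not only effective ones. The paper does not record this here, and it is not needed for the iff, but it does no harm.

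Your \textbf{Main obstacle} paragraph, however, is misplaced. You write that ``the hard direction is showing that effectiveness forces $\gamma^A\le\gamma$'' and then sketch a compactness-and-iteration argument to locate a minimiser in $A$. But there is no hard direction in this lemma: you yourself established in the Plan that effectiveness is \emph{by definition} equivalent to $\gamma=\gamma^A$, so effectiveness gives $\gamma^A\le\gamma$ tautologically. What you are outlining is the proof that \emph{every} equivalence continuous submetric is effective---that is \Cref{t:effective-equiv}, the next result, and the compactness argument you anticipate is exactly \Cref{l:idempotents-are-reflexive-up-to-iso}. The present lemma is purely a reformulation of effectiveness in terms of the mixed entries and requires none of that analysis; delete the Main obstacle paragraph and your proof stands.
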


\begin{proof}
	Let us endow $A$ with the induced topology and induced metric.
	Denoting by $\binom{q_0}{q_1}\colon X+X \epi S$ the binary corelation on $X$ associated with $\gamma$, we have
	\[
	A = \{a \in X \mid d(q_0(a), q_1(a)) = 0 = d(q_1(a), q_0(a))\} = \{a \in X \mid q_0(a) = q_1(a)\}.
	\]
	Therefore, the embedding $A \rmono X$ is the equaliser of $q_0,q_1\colon X\rightrightarrows S$ in $\SMKH$.
	Thus, $\gamma$ is effective if and only if the following is a pushout in $\SMKH$.
	\begin{equation*}
		\begin{tikzcd}
			A \arrow[hookrightarrow]{r}{} 	\arrow[hookrightarrow]{d}{}						& X \arrow{d}{q_1} \\
			X  \arrow[swap]{r}{q_0}	& S
		\end{tikzcd}
	\end{equation*}
	In turn, by \cref{l:cokernel-inclusion}, this is equivalent to saying that ${\gamma}={\gamma^A}$.
	By definition of $\gamma^A$, for all $x, y \in X$ and $i \in \{0,1\}$ we have
	\[
	\gamma^{A}((x,i),(y,i)) = d(x, y),
	\]
	and
	\[
	\gamma^A((x,i), (y,i^*)) = \inf_{a \in A} \big(d(x, a) + d(a, y)\big).
	\]
	By \cref{r:reflexive-is-emb}, $\gamma((x,i),(y,i)) = d(x, y)$.
	The desired fact follows.
\end{proof}

\begin{lemma}\label{l:idempotents-are-reflexive-up-to-iso}
	Let $X$ be a compact Hausdorff space.
	Let $\rho \colon X \times X \to [0, \infty]$ be a continuous function with respect to the upper topology of $[0, \infty]$, and suppose that for all $x, y \in X$ we have
	\[
		\rho(x, y) = \inf_{z \in X} \big(\rho(x, z) + \rho(z, y)\big).
	\]
	Then, setting $A \coloneqq \{x \in X \mid \rho(x, x) = 0\}$, we have
	\[
		\rho(x, y) = \inf_{a \in A} \big(\rho(x, a) + \rho(a, y)\big).
	\]
\end{lemma}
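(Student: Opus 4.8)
Only the inequality $\inf_{a\in A}\big(\rho(x,a)+\rho(a,y)\big)\le\rho(x,y)$ requires work: the reverse inequality is immediate, since the hypothesis gives $\rho(x,z)+\rho(z,y)\ge\rho(x,y)$ for every $z\in X$, in particular for every $z=a\in A$. Fix $x,y$ and write $v\coloneqq\rho(x,y)$. If $v=\infty$, the reverse inequality already forces $\inf_{a\in A}\big(\rho(x,a)+\rho(a,y)\big)=\infty$, so equality holds; assume therefore $v<\infty$. The plan is to produce a single point $a\in A$ with $\rho(x,a)+\rho(a,y)=v$.

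I would search for $a$ among the optimal midpoints. Since $\rho$ is lower semicontinuous and $X$ is compact, the function $g(z)\coloneqq\rho(x,z)+\rho(z,y)$ attains its minimum, which by idempotence equals $v$; set $M\coloneqq\{z\in X\mid g(z)=v\}$, a nonempty compact set, and note that every $z\in M$ is a legitimate splitting point. I then restrict to the nonempty compact subset $M_0\subseteq M$ on which the lower semicontinuous map $z\mapsto\rho(x,z)$ attains its minimum $t_0$ over $M$.

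The crucial step is that every point of $M_0$ admits a zero-cost predecessor inside $M_0$. Indeed, for $a\in M_0$ idempotence gives $\rho(x,a)=\inf_{w\in X}\big(\rho(x,w)+\rho(w,a)\big)$, and this infimum is attained at some $w$ by compactness and lower semicontinuity. A short triangle-inequality computation yields $g(w)\le g(a)=v$, so $w\in M$; since moreover $\rho(x,w)\le\rho(x,a)=t_0$ and $t_0$ is minimal on $M$, in fact $\rho(x,w)=t_0$ and $\rho(w,a)=0$, that is, $w\in M_0$ and $(w,a)$ lies in the relation $R\coloneqq\{(w,a)\in M_0\times M_0\mid\rho(w,a)=0\}$. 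This $R$ is transitive by the triangle inequality and closed because $\rho$ is lower semicontinuous, so that $\{\rho=0\}$ is closed.

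It remains to close the loop. Starting from any point and iterating the choice of predecessor yields points $a_0,a_1,a_2,\dots\in M_0$ with $(a_l,a_k)\in R$ whenever $l>k$, by transitivity. Taking an ultrafilter limit $a_\infty\in M_0$ (which exists as $M_0$ is compact), closedness of $R$ forces $\rho(a_\infty,a_k)=0$ for all $k$; since all $a_k$ then lie in the closed set $\{w\mid\rho(a_\infty,w)=0\}$, whose limit along the ultrafilter is $a_\infty$ itself, we also get $\rho(a_\infty,a_\infty)=0$. Hence $a_\infty\in A\cap M_0$ and $\rho(x,a_\infty)+\rho(a_\infty,y)=v$, as required. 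I expect this last step to be the main obstacle: since $X$ is only compact Hausdorff and need not be metrizable, the $a_k$ may have no convergent subsequence, so one must argue with cluster points of nets (equivalently ultrafilter limits), or, alternatively, invoke Zorn's lemma to find a reflexive point of the closed transitive relation $R$ in which every element has a predecessor. Lower semicontinuity is used throughout: it is exactly what makes $\{\rho=0\}$ closed and lets the limiting point inherit a zero-cost self-loop.
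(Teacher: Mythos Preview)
Your argument is correct, and it takes a genuinely different route from the paper's. You localise to the compact set $M$ of optimal midpoints, then further to the subset $M_0\subseteq M$ minimising $\rho(x,\cdot)$, and observe that idempotence forces every point of $M_0$ to have a $\rho$-cost-zero predecessor in $M_0$; iterating and passing to an ultrafilter limit yields a reflexive point. The paper instead constructs, for each finite closed cover $\mathcal{A}$ of $X$ and each $\lambda>0$, a closed set $D^{\mathcal{A}}_\lambda\subseteq X$ built from those members $K\in\mathcal{A}$ containing a pair $u,v$ with $\rho(u,v)\le\lambda$ and $\rho(x,u)+\rho(u,v)+\rho(v,y)=\rho(x,y)$; a pigeonhole argument on long chains of optimal splitting points shows each $D^{\mathcal{A}}_\lambda$ is nonempty, and the codirected intersection $\bigcap D^{\mathcal{A}}_\lambda$ is exactly $\{a\in A\mid \rho(x,a)+\rho(a,y)=\rho(x,y)\}$. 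The trade-off is this: your proof is shorter and conceptually direct, but it relies on dependent choice (to build the sequence $(a_k)$) together with the ultrafilter lemma or Zorn's lemma (for the limiting step), whereas the paper's cover-based argument is choice-free---a point the authors explicitly emphasise, since it yields a choice-free proof that $\PosComp$ is Barr-coexact, improving on an earlier argument that used Zorn's lemma.
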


\begin{proof}
	By the triangle inequality, it is enough to prove the inequality $\geq$.

	Fix $x, y \in X$, and let us prove $\rho(x, y) \geq \inf_{a \in A} \big(\rho(x, a) + \rho(a, y)\big)$.

	If $\rho(x, y) = \infty$, the inequality is trivial.
	Let us then assume that $\rho(x, y) < \infty$.
	It is enough to show that the set
	\begin{equation} \label{eq:intersection}
	\{a \in A \mid \rho(x,y) = \rho(x,a) + \rho(a,y)\}
	\end{equation}
	is nonempty;
	for this, by compactness of $X$, it is enough to prove that this set is a codirected intersection of closed nonempty sets (see \cite[Theorem~3.1.1]{Engelking1989}).

    Let $\mathcal{V}$ be the set of closed subsets of $X$, and, for each $\lambda \in {]0, \infty]}$, set
    \[
    \mathcal{W}_\lambda \coloneqq \{K \in \mathcal{V} \mid \exists u,v \in K.\ \rho(u,v) \leq \lambda, \, \rho(x, u) + \rho(u, v) +\rho(v,y) = \rho(x,y)\}.
    \]

    Let $\mathcal{F}$ be the set of finite closed covers of $X$, i.e.\ the set of finite subsets $\mathcal{A}$ of $\mathcal{V}$ such that $\bigcup \mathcal{A} = X$.
    For every $\mathcal{A} \in \mathcal{F}$ and $\lambda \in {]0, \infty]}$, we set
    \[
    D^\mathcal{A}_\lambda \coloneqq \bigcup_{K \in \mathcal{A} \cap \mathcal{W}_\lambda} K.
    \]

    We now prove that the set $\{D^\mathcal{A}_\lambda \mid \mathcal{A} \in \mathcal{F},\, \lambda \in {]0, \infty]}\}$ is a codirected set of closed nonempty sets whose intersection is \eqref{eq:intersection}.

    For all $\mathcal{A} \in \mathcal{F}$ and $\lambda \in {]0, \infty]}$, the set $D^\mathcal{A}_\lambda$ is closed because it is a finite union of closed sets.

    The set $\{D^\mathcal{A}_\lambda \mid \mathcal{A} \in \mathcal{F}, \, \lambda \in {]0, \infty]}\}$ is codirected because $D^{\{X\}}_{\infty}$ belongs to it and, for all $\mathcal{A}, \mathcal{A}' \in \mathcal{F}$ and $\lambda, \lambda' \in {]0, \infty]}$, setting $\bar{A} \coloneqq {\{K \cap K' \mid K \in \mathcal{A},\, K' \in \mathcal{A}'\}}$ and $\bar{\lambda} \coloneqq {\min\{\lambda, \,\lambda'\}}$,
    \[
        D^{\bar{\mathcal{A}}}_{\bar{\lambda}} \subseteq D^\mathcal{A}_\lambda \cap D^{\mathcal{A}'}_{\lambda'}.
    \]

    We show that, for all $\mathcal{A} \in \mathcal{F}$ and $\lambda \in {]0, \infty]}$, the set $D^\mathcal{A}_\lambda$ is nonempty.
    We denote with $\sharp S$ the cardinality of a set $S$.
    Pick any natural number $l$ such that $\frac{l}{\sharp \mathcal{A}} > 1$ and $\frac{\rho(x,y)}{\frac{l}{\sharp \mathcal{A}} - 1} \leq \lambda$.
	Having fixed $x,y \in X$, we can use nonemptiness of $X$, and so for all $u, v \in X$ there is $z \in X$ such that $\rho(u,v) = \rho(u, z) + \rho(z, v)$, i.e.\ the infimum in the hypothesis of the lemma is a minimum.
    Thus, there are $z_1, \dots, z_{l} \in X$ such that
    \[
	    	\rho(x,y) = \rho(x,z_1) + \rho(z_1, z_2) + \dots + \rho(z_{l-1}, z_l) + \rho(z_{l}, y).
    \]
    Since every $z_i$ belongs to some $K \in \mathcal{A}$, we have $\sum_{K \in \mathcal{A}} \sharp (K \cap \{z_1, \dots, z_{l}\}) \geq l$.
    Therefore, the average of $\sharp(K \cap \{z_1, \dots, z_{l}\})$ for $K$ ranging in $\mathcal{A}$ is greater than or equal to $\frac{l}{\sharp \mathcal{A}}$. (The average makes sense because, by nonemptiness of $X$, $\sharp \mathcal{A} \neq 0$.)
    Therefore, there is $K \in \mathcal{A}$ with $\sharp (K \cap \{z_1, \dots, z_{l}\}) \geq \frac{l}{\sharp \mathcal{A}}$.
    Let $z_{i_1}, \dots, z_{i_n}$ (with $i_1 < \dots < i_n$) be an enumeration of the elements of $K \cap \{z_1, \dots, z_{l}\}$.
    Note that $n \geq \frac{l}{\sharp \mathcal{A}} > 1$ and so $n \geq 2$.
    We have
    \begin{align*}
	    	\rho(x,y) & = \rho(x,z_1) + \rho(z_1, z_2) + \dots + \rho(z_{l-1}, z_l) + \rho(z_{l}, y)\\
		& \geq \rho(z_{i_1}, z_{i_2}) + \dots + \rho(z_{i_{n-1}}, z_{i_{n}}).
    \end{align*}
    Therefore, the average of $\rho(z_{i_j}, z_{i_j})$ for $j$ ranging in $\{1, \dots, n-1\}$ is less than or equal to $\frac{\rho(x,y)}{n-1}$. (The average makes sense since $n \geq 2$ and so $n -1 \geq 1$.)
    Therefore, there is $j \in \{1, \dots, n-1\}$ such that
    \[
    \rho(z_{i_j}, z_{i_{j+1}}) \leq \frac{\rho(x,y)}{n - 1},
    \]
    and so
    \[
    \rho(z_{i_j}, z_{i_{j+1}}) \leq \frac{\rho(x,y)}{n - 1} \leq \frac{\rho(x,y)}{\frac{l}{\sharp \mathcal{A}} - 1} \leq \lambda.
    \]
    We have
    \begin{align*}
    		\rho(x,y) & \leq \rho(x,z_{i_j}) + \rho(z_{i_j},z_{i_{j+1}}) + \rho(z_{i_{j+1}}, y) && \text{by triangle inequality}\\
		& \leq \rho(x,z_1) + \rho(z_1, z_2) + \dots + \rho(z_{l-1}, z_l) + \rho(z_{l}, y)&&\text{by triangle inequality}\\
		& = \rho(x,y),
    \end{align*}
	and hence
	\[
		\rho(x,y) = \rho(x,z_{i_j}) + \rho(z_{i_j},z_{i_{j+1}}) + \rho(z_{i_{j+1}}, y).
	\]
	 Therefore, $K \in \mathcal{W}_\lambda$.
	 Thus, $\varnothing \neq K \subseteq D^\mathcal{A}_\lambda$, and hence $D^\mathcal{A}_\lambda \neq \varnothing$.

    We now prove
    \begin{equation} \label{e:int-is-desired}
        \bigcap_{\mathcal{A} \in \mathcal{F},\, \lambda \in {]0, \infty]}} D^\mathcal{A}_\lambda = \{a \in A \mid \rho(x,y) = \rho(x,a) + \rho(a,y)\}.
    \end{equation}

    The inclusion $\supseteq$ is immediate.

    Let us prove the converse inclusion, i.e.\ $\subseteq$.
    Let $z \in \bigcap_{\mathcal{A} \in \mathcal{F},\, \lambda \in {]0, \infty]}} D^\mathcal{A}_\lambda$.

    We first prove that $\rho(x,y) = \rho(x,z) + \rho(z,y)$.
    By way of contradiction, suppose this is not the case.
    Then, $\rho(x,y) < \rho(x,z) + \rho(z,y)$ (since the inequality $\geq$ holds by the triangle inequality).
    The function
    \begin{align*}
    		f \colon X \times X & \longrightarrow [0, \infty]\\
		(u,v) & \longmapsto \rho(x,u) + \rho(v,y)
    \end{align*}
    is continuous with respect to the upper topology of $[0,\infty]$ because $\rho$ is such.
    Since $f(z,z) = \rho(x,z) + \rho(z,y) > \rho(x,y)$, there is an open neighbourhood $U$ of $z$ such that, for all $u,v \in U$, $f(u, v) > \rho(x,y)$.
    Then, since $X$ is a compact Hausdorff space, there are closed subsets $K$ and $L$ of $X$ such that $z \in K \subseteq U$, $z \notin L$ and $K \cup L = X$.
    We have $K \notin \mathcal{W}_\infty$ because for all $u,v \in K$ we have $\rho(x,u) + \rho(u, v) + \rho(v, y) = f(u,v) + \rho(u,v) \geq f(u,v) > \rho(x,y)$.
    From $K \notin \mathcal{W}_\infty$ and $z \notin L$ we deduce $z \notin D^{\{K,L\}}_\infty$, a contradiction.
    Thus, $\rho(x,y) = \rho(x,z) + \rho(z,y)$.

    We now prove $z \in A$, i.e.\ $\rho(z,z) = 0$.
    By way of contradiction, suppose this is not the case.
    Choose $\lambda$ such that $0 < \lambda < \rho(z,z)$.
    Then, since $\rho$ is continuous, there is an open neighbourhood $U$ of $z$ such that $\rho(u,v) > \lambda$ for all $u,v \in U$.
    Then, since $X$ is compact and Hausdorff, there are closed subsets $K$ and $L$ of $X$ such that $z \in K \subseteq U$, $z \notin L$ and $K \cup L = X$.
    We have $K \notin \mathcal{W}_\lambda$ since $\rho(u,v) > \lambda$ for all $u, v \in X$.
    From $K \notin \mathcal{W}_\lambda$ and $z \notin L$ we deduce $z \notin D^{\{K, L\}} _\lambda$, a contradiction.

    By compactness, $\bigcap_{\mathcal{A} \in \mathcal{F},\, \lambda \in {]0, \infty]}} D^\mathcal{A}_\lambda$ is nonempty and thus, by \eqref{e:int-is-desired}, there is $a \in A$ such that $\rho(x,y) = \rho(x,a) + \rho(a,y)$.
\end{proof}

\begin{remark} \label{r:enough-reflexive}
	\Cref{l:idempotents-are-reflexive-up-to-iso} has the following corollary: given a closed idempotent endorelation $\prec$ on a compact Hausdorff space $X$, for every $x, y \in X$ with $x \prec y$ there is $a \in X$ such that $x \prec a \prec a \prec y$.
	Indeed, it is enough to apply \Cref{l:idempotents-are-reflexive-up-to-iso} to the continuous function $d \colon X \times X \to [0, \infty]$ defined by
	\[
	d(x,y) = \begin{cases}
		0 & \text{if $x \prec y$,}\\
		\infty & \text{otherwise.}
	\end{cases}
	\]
\end{remark}

\begin{theorem}\label{t:effective-equiv}
	Every equivalence corelation in $\SMKH$ is effective.
\end{theorem}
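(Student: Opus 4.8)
The plan is to reduce everything to \cref{l:effective-char} and then invoke the substantial compactness argument already packaged in \cref{l:idempotents-are-reflexive-up-to-iso}. Let $\gamma$ be the equivalence continuous submetric corresponding to a given equivalence corelation on $X$, and set $A \coloneqq \{a \in X \mid \gamma((a,0),(a,1)) = 0\}$ as in \cref{l:effective-char}. By that lemma, it suffices to prove that for all $x,y \in X$ and $i \in \{0,1\}$,
\[
\gamma((x,i),(y,i^*)) = \inf_{a \in A}\big(d_X(x,a) + d_X(a,y)\big).
\]

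First I would introduce the auxiliary function $\rho \colon X \times X \to [0,\infty]$ defined by $\rho(x,y) \coloneqq \gamma((x,0),(y,1))$. Being the restriction of $\gamma$ to one of the four pieces of $(X+X)\times(X+X)$, it is continuous with respect to the upper topology of $[0,\infty]$. The transitivity characterisation of \cref{l:transitive} (with $i = 0$) says precisely that $\rho(x,y) = \inf_{z \in X}\big(\rho(x,z) + \rho(z,y)\big)$, so $\rho$ satisfies the idempotency hypothesis of \cref{l:idempotents-are-reflexive-up-to-iso}. Moreover, $\{x \in X \mid \rho(x,x) = 0\}$ is exactly the set $A$. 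Applying \cref{l:idempotents-are-reflexive-up-to-iso} then yields
\[
\gamma((x,0),(y,1)) = \rho(x,y) = \inf_{a \in A}\big(\rho(x,a) + \rho(a,y)\big).
\]

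The remaining step is to identify $\rho$ with $d_X$ on the set $A$. For $a \in A$ I would use reflexivity in two directions: by \cref{l:refl} we have $d_X(x,a) \leq \gamma((x,0),(a,1)) = \rho(x,a)$, while the triangle inequality for $\gamma$ applied to the points $(x,0)$, $(a,0)$, $(a,1)$ together with \cref{r:reflexive-is-emb} gives $\rho(x,a) = \gamma((x,0),(a,1)) \leq \gamma((x,0),(a,0)) + \gamma((a,0),(a,1)) = d_X(x,a) + 0$; hence $\rho(x,a) = d_X(x,a)$, and an analogous computation (via $(a,0)$, $(a,1)$, $(y,1)$) gives $\rho(a,y) = d_X(a,y)$. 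Substituting into the displayed equality proves the case $i = 0$; the case $i = 1$ follows at once from the symmetry of $\gamma$ (\cref{l:symm}), since $\gamma((x,1),(y,0)) = \gamma((x,0),(y,1))$ and the right-hand infimum is unchanged. By \cref{l:effective-char}, $\gamma$ is therefore effective, which is exactly the claim.

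I do not expect a genuine obstacle in this argument: the only delicate point, namely extracting an actual element $a \in A$ realising the infimum through a codirected-intersection-of-closed-sets compactness argument, has already been isolated and dispatched in \cref{l:idempotents-are-reflexive-up-to-iso}. What is left is essentially bookkeeping: choosing $\rho$ so that the transitivity condition turns into the idempotency hypothesis of that lemma, verifying $\{x \mid \rho(x,x)=0\} = A$, and checking that $\rho$ restricts to $d_X$ on $A$ using only reflexivity and the triangle inequality.
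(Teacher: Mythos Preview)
Your proposal is correct and follows essentially the same route as the paper: define $\rho(x,y)=\gamma((x,0),(y,1))$, verify continuity and the idempotency hypothesis of \cref{l:idempotents-are-reflexive-up-to-iso} via \cref{l:transitive}, apply that lemma, and then identify $\rho$ with $d_X$ on $A$ using reflexivity and the triangle inequality before concluding with \cref{l:effective-char}. The only cosmetic difference is that the paper records the needed properties of $\rho$ (including $\rho(x,y)\le d(x,z)+\rho(z,y)$) as a short list, whereas you unpack the triangle inequality for $\gamma$ directly; the content is the same.
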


\begin{proof}
	Let $\gamma$ be an equivalence continuous submetric on $X \in \SMKH$.
	For $x,y \in X$, set $\rho(x,y) \coloneqq \gamma((x,0), (y,1))$ ($= \gamma((x,1), (y, 0))$, by \cref{l:symm}).
	Set
	\[
	A \coloneqq \{a \in X \mid \rho(a,a) = 0\}.
	\]
	In view of \cref{l:effective-char}, we shall show that, for all $x, y \in X$,
	\begin{equation*}
		\rho(x, y) = \inf_{a \in A}\big(d_X(x,a) + d_X(a,y)\big).
	\end{equation*}

	Here are some properties of $\rho$.
	\begin{enumerate}
		\item \label{i:d-less-than-rho}For all $x,y \in X$, $d(x,y) \leq \rho(x,y)$.
		\item \label{i:rho-triangle-1}
		For all $x,y,z \in X$, $\rho(x,y) \leq d(x,z) + \rho(z,y)$.
		\item \label{i:rho-triangle-2}
		For all $x,y,z \in X$, $\rho(x,y) \leq \rho(x,z) + d(z,y)$.
		\item \label{i:rho-cont}
		The function $\rho \colon X \times X \to [0, \infty]$ is continuous with respect to the upper topology of $[0,\infty]$.
		\item \label{i:idempotence}
		For all $x, y \in X$, $\rho(x, y) = \inf_{z \in X} \big(\rho(x, z) + \rho(z, y)\big)$.
	\end{enumerate}
	Indeed, \eqref{i:d-less-than-rho} follows from \cref{l:refl},
	\eqref{i:rho-triangle-1} and \eqref{i:rho-triangle-2} follow from the triangle inequality of $\gamma$ and the fact that $\gamma$ is below the coproduct metric, \eqref{i:rho-cont} follows from the continuity of $\gamma$, and \eqref{i:idempotence} follows from \cref{l:transitive} and the transitivity of $\gamma$.

	Therefore, we can apply \cref{l:idempotents-are-reflexive-up-to-iso} to $\rho$ and obtain
	\[
		\rho(x, y) = \inf_{a \in A} \big(\rho(x, a) + \rho(a, y)\big).
	\]

	Moreover, for all $x \in X$ and $a \in A$, we have
	\[
	\rho(x, a) \leq d(x,a) + \rho(a, a) = d(x, a) \leq \rho(x, a),
	\]
	and hence $\rho(x, a) = d(x, a)$; similarly, $\rho(a,x) = d(a,x)$.

	Therefore, for all $x, y \in X$,
	\[
		\rho(x,y) = \inf_{a \in A} \big(\rho(x, a) + \rho(a, y)\big) = \inf_{a \in A} \big(d(x, a) + d(a,y)\big). \qedhere
	\]
\end{proof}

Finally, from \cref{t:regular,t:effective-equiv} we obtain the main result:

\begin{theorem}\label{d:thm:3}
	$\SMKH$ is Barr-coexact.
\end{theorem}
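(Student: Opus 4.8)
The plan is to unwind the definition of Barr-coexactness and assemble the two main results of the preceding sections. Saying that $\SMKH$ is Barr-coexact means precisely that $\SMKHop$ is Barr-exact, and by definition a category is Barr-exact exactly when it is regular and every internal equivalence relation in it is effective. Both halves have essentially been secured already, so the task is to recognise them as such and to dualise carefully.

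First, regularity of $\SMKHop$ is exactly \Cref{t:regular}, which was obtained by combining the factorisation system of \cref{d:thm:2}, the identifications of epimorphisms with surjections and of regular monomorphisms with embeddings (\cref{p:epi=surj,p:rmono=emb}), and the pushout-stability of embeddings (\cref{c:pushout-reg-mono-is-reg-mono}). Thus it remains only to treat effectiveness.

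Second, I would spell out that an internal equivalence relation in $\SMKHop$ is, under dualisation, exactly an equivalence corelation in $\SMKH$ in the sense defined above: a monomorphism into the product $X \times X$ of $\SMKHop$ corresponds to an epimorphism out of the coproduct $X + X$ of $\SMKH$ (a binary corelation), and the reflexivity, symmetry and transitivity diagrams for an internal relation dualise termwise to the three diagrams defining a reflexive, symmetric, transitive corelation. Likewise, effectiveness of an equivalence relation---being the kernel pair of its coequaliser---dualises to effectiveness of a corelation in the sense already fixed, namely coinciding with the cokernel pair of its equaliser. Hence every internal equivalence relation in $\SMKHop$ is effective if and only if every equivalence corelation in $\SMKH$ is effective, and the latter is precisely \Cref{t:effective-equiv}.

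Putting the two together, $\SMKHop$ is regular and has effective equivalence relations, hence is Barr-exact, so $\SMKH$ is Barr-coexact, as claimed. The point worth stressing is that all the genuine difficulty has been front-loaded into the earlier sections: the main obstacle was never this final assembly but rather \Cref{t:effective-equiv}, whose heart is the compactness-and-averaging argument of \Cref{l:idempotents-are-reflexive-up-to-iso} showing that a closed idempotent ``relation'' splits through its diagonal fixed-point set. Given that lemma together with the coregularity of \Cref{t:regular}, the present statement follows with no further computation.
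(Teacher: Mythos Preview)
Your proposal is correct and follows exactly the paper's approach: the paper's own proof consists of a single sentence invoking \Cref{t:regular} and \Cref{t:effective-equiv}. You have merely spelled out the dualisation explicitly and noted where the genuine work lies, which is helpful commentary but not a different argument.
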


Let us quickly point out that there is no hope for such a result to hold without separation, as it is already visible in the preordered case.

\begin{example}[Preorders are not Barr-exact]
  In the category of preordered sets and monotone functions, equivalence relations need not be effective.
  Indeed, the two maps from a singleton $\{*\}$ to a two-element set $\{a,b\}$ with $a \leq b$ and $b \leq a$ form an equivalence corelation on $\{*\}$ which is not effective.
\end{example}

\cref{d:thm:3} shows an algebraic trait of the dual of $\SMKH$.
As a negative result, we note that $\SMKH$ cannot be dually equivalent to a variety of \emph{finitary} algebras, since, by \cite[Corollary~4.30]{HN23}, every finitely copresentable object in $\SMKH$ is finite.
However, the following remains open to us:

\begin{question}
Is the category $\SMKH$ dually equivalent to a (possibly many-sorted) variety of (possibly infinitary) algebras?
\end{question}
Having shown that the complete category $\SMKH$ is coexact, this problem amounts (see \cite{Bor94a, AR94}) to the question of whether $\SMKH$ has a regular cogenerating set formed by regular injective objects.

\section{The symmetric and the ordered cases}
\label{sec:symm-order-cases}

Let us end this paper with a few remarks about symmetric metrics and compact ordered spaces.
Recall from \cref{d:rem:1} that $\SSMKH$ denotes the full subcategory of $\SMKH$ defined by the symmetric objects (i.e.\ those satisfying $d(x,y) = d(y,x)$), and that the inclusion functor $\SSMKH \rightarrow \SMKH$ has a right adjoint and a left adjoint.
This, together with the fact that $\SMKH$ is Barr-coexact (\cref{d:thm:3}), implies immediately the following result.

\begin{theorem}
   The category $\SSMKH$ is Barr-coexact.
\end{theorem}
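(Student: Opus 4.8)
The plan is to transfer Barr-coexactness from $\SMKH$ (\cref{d:thm:3}) along the full inclusion $I \colon \SSMKH \to \SMKH$. By \cref{d:rem:1}, $I$ is fully faithful and admits both a left and a right adjoint, so $I$ preserves all limits and all colimits; being fully faithful, it moreover reflects them. Concretely, a square of symmetric objects is a pushout (resp.\ pullback, equaliser, coproduct) in $\SSMKH$ if and only if its image is such in $\SMKH$, and any limit or colimit of a diagram in $\SSMKH$ may be computed in $\SMKH$. A first consequence is that the two relevant classes of maps are inherited: since regular monomorphisms are equalisers and $I$ preserves and reflects equalisers, a morphism of symmetric objects is a regular monomorphism in $\SSMKH$ iff it is one in $\SMKH$, i.e.\ an embedding by \cref{p:rmono=emb}; likewise, comparing cokernel pairs and using faithfulness, a morphism is an epimorphism in $\SSMKH$ iff it is an epimorphism, hence a surjection by \cref{p:epi=surj}, in $\SMKH$.

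First I would establish coregularity of $\SSMKH$, i.e.\ regularity of $\SSMKHop$. The category $\SSMKH$ is complete and cocomplete by \cref{d:rem:1}, so it has all pushouts and all pullbacks (in particular pullbacks of cokernel pairs). It remains to check that regular monomorphisms are stable under pushout. Given a regular monomorphism of symmetric objects --- an embedding, by the previous paragraph --- and an arbitrary morphism, their pushout in $\SSMKH$ coincides with their pushout in $\SMKH$ (colimits are computed in $\SMKH$), which is an embedding by \cref{c:pushout-of-emb-is-emb}; being an embedding between symmetric objects, it is a regular monomorphism in $\SSMKH$. Hence $\SSMKHop$ is regular.

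Next I would show that every equivalence corelation in $\SSMKH$ is effective. Because $I$ preserves and reflects coproducts, epimorphisms, and the pushouts appearing in the definitions of reflexivity, symmetry and transitivity, an equivalence corelation $\binom{q_0}{q_1}\colon X+X \epi S$ in $\SSMKH$ is the same datum as an equivalence corelation in $\SMKH$ whose objects happen to be symmetric. By \cref{t:effective-equiv} it is effective in $\SMKH$: writing $i \colon A \rmono X$ for the equaliser of $q_0,q_1$ (computed identically in both categories, with $A$ symmetric since $\SSMKH$ is closed under limits), the cokernel-pair square on $i$ is a pushout in $\SMKH$. This square consists entirely of symmetric objects, and since $I$ reflects pushouts it is already a pushout in $\SSMKH$; thus the corelation is effective in $\SSMKH$. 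Together with coregularity this yields that $\SSMKH$ is Barr-coexact.

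The argument is essentially bookkeeping, and the step I regard as the crux is the verification that all the constructions entering the notions of equivalence corelation and of effectiveness --- the coproduct $X+X$, the quotient objects, and the transitivity and cokernel-pair pushouts --- are genuinely preserved and reflected by $I$. This is exactly what the two-sided adjunction of \cref{d:rem:1} delivers: the left adjoint forces $I$ to preserve colimits and the right adjoint forces it to preserve limits, while full faithfulness upgrades preservation to reflection. Once this is in place, no new analysis of symmetric metrics is required, and the result follows formally from the corresponding facts for $\SMKH$.
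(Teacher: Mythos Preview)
Your proposal is correct and follows essentially the same approach as the paper: the paper's proof is a one-liner observing that since the inclusion $\SSMKH \hookrightarrow \SMKH$ has both a left and a right adjoint (\cref{d:rem:1}), Barr-coexactness of $\SMKH$ (\cref{d:thm:3}) transfers immediately to $\SSMKH$. You have simply unpacked this ``immediately'' by spelling out how the two-sided adjunction, together with full faithfulness, lets one compute and reflect all the (co)limits entering the definition of Barr-coexactness; the only spot where your phrasing is slightly elliptical is the biconditional for regular monomorphisms, which is cleanest via the cokernel-pair characterisation (a morphism is a regular mono iff it equalises its own cokernel pair), but you allude to this for epimorphisms and the argument goes through.
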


\begin{remark}
  Building on the results of \Cref{s:quotient-objects}, it is easy to see that, in the symmetric case, the class of equivalence classes of surjections going out from $(X, d_X)$ is in bijection with the continuous \emph{symmetric} submetrics on $X$.
\end{remark}

\begin{remark}
  The category $\CompHaus$ of compact Hausdorff spaces, which can be identified with the full subcategory of $\PosComp$ defined by the symmetric objects, is coMal'tsev.
  One might then suspect that $\SSMKH$ is coMal'tsev too.
  However, this is not the case: $\SSMKHop$ is not a Mal'tsev category, as there are reflexive internal relations in $\SSMKHop$ that are neither symmetric nor transitive.
  An example is the following: let $X = \{a, b\}$ be a two-element discrete space, with metric $d(a,b) = d(b,a) = 1$ (and self-distances equal to $0$).
  Consider the following binary continuous submetric $\gamma$ on $X$ (i.e.\ a continuous metric below $d_{X + X}$): self-distances are $0$, and all other distances are $\infty$, except for the distances from $(a,0)$ to $(b, 1)$ and from $(b,1)$ to $(a,0)$ which are $1$.
  Comparing this to the ``ordered'' case, it seems that what breaks being Mal'tsev is the possibility of having more than just two possible values for the distances.
\end{remark}

Recall also from \cref{d:rem:2} that the inclusion $\PosComp \hookrightarrow \SMKH$ has both a right and a left adjoint.
This, together with the fact that $\SMKH$ is Barr-coexact (\cref{d:thm:3}), implies immediately the main result of \cite{AR20}:

\begin{theorem}
	The category $\PosComp$ is Barr-coexact.
\end{theorem}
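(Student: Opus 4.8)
The plan is to transport Barr-coexactness from $\SMKH$ (\cref{d:thm:3}) to $\PosComp$ along the full inclusion $\iota\colon\PosComp\hookrightarrow\SMKH$. The essential input, recorded in \cref{d:rem:2}, is that $\iota$ is fully faithful and admits both a left and a right adjoint; hence $\PosComp$ is closed in $\SMKH$ under all limits and under all colimits, and therefore $\iota$ creates both. Passing to opposite categories, $\iota^\op\colon\PosComp^\op\hookrightarrow\SMKHop$ is a full inclusion creating limits and colimits, while $\SMKHop$ is Barr-exact by \cref{d:thm:3}; the goal is to show that $\PosComp^\op$ inherits this property.

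First I would observe that every notion occurring in the definitions of regular and Barr-exact category is absolute along $\iota^\op$. Since $\iota^\op$ creates finite limits, products, equalisers, pullbacks and kernel pairs are computed identically in $\PosComp^\op$ and $\SMKHop$; since it creates colimits, the same holds for coequalisers and for the pushouts of kernel pairs demanded by regularity. Combined with fullness, this makes monomorphisms, epimorphisms, regular epimorphisms and kernel pairs absolute between objects of $\PosComp^\op$ (a regular epi in the regular category $\SMKHop$ being the coequaliser of its kernel pair, both of which $\iota^\op$ creates). Regularity of $\PosComp^\op$ is then immediate: finite completeness and pushouts of kernel pairs come from closure under the relevant (co)limits, while pullback-stability of regular epis follows by forming the pullback in $\SMKHop$, which lands in $\PosComp^\op$ by limit-closure and whose regular-epi leg stays a regular epi by absoluteness.

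It remains to show that equivalence relations in $\PosComp^\op$ are effective. Such a relation is assembled from finite limits together with the diagrammatic reflexivity, symmetry and transitivity conditions (the last invoking a pushout), all of which $\iota^\op$ creates; by fullness it is therefore simultaneously an equivalence relation in $\SMKHop$. Barr-exactness of $\SMKHop$ renders it effective there, i.e.\ the kernel pair of its coequaliser; the coequaliser is created in $\PosComp^\op$ by colimit-closure, so its kernel pair—being a limit—is created there as well, whence the relation is effective in $\PosComp^\op$. The one point demanding care, and the real content behind the word \emph{immediately}, is exactly this absoluteness: one must confirm that being an equivalence relation and being effective are genuinely computed the same way in $\PosComp^\op$ and in $\SMKHop$, which is precisely what the creation of limits and colimits by $\iota^\op$ secures. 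I expect this bookkeeping to be the only genuine step, the rest being formal consequences of the two adjunctions of \cref{d:rem:2}.
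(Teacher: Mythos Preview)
Your proposal is correct and is precisely the argument the paper has in mind: the paper's proof is the single sentence that the inclusion $\PosComp\hookrightarrow\SMKH$ has both a left and a right adjoint (\cref{d:rem:2}), which together with \cref{d:thm:3} ``implies immediately'' the result, and you have spelled out exactly this implication. One small slip: in $\PosComp^\op$ the transitivity datum of an internal equivalence relation is a \emph{pullback} (the relational composite $R\times_X R$), not a pushout---it is only when you dualise to corelations in $\PosComp$ that a pushout appears---but since $\iota^\op$ creates both limits and colimits this does not affect your argument.
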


While the proof in \cite{AR20} uses Zorn's lemma, here we have illustrated a choice-free proof (thanks to a choice-free proof of \cref{r:enough-reflexive}).

\section*{Acknowledgements}
We thank Guram Bezhanishvili for a discussion that made us realise an error in the proof of \cref{l:idempotents-are-reflexive-up-to-iso} in an earlier version of this paper.
We also thank the referee for their comments and suggestions which helped us to improve substantially the presentation of the paper.


\begin{thebibliography}{GHK{\etalchar{+}}03}
\providecommand{\url}[1]{\texttt{#1}}
\providecommand{\urlprefix}{URL }
\providecommand{\eprint}[2][]{\url{#2}}

\bibitem[Abb19]{Abb19}
\textsc{Abbadini, M.}
\newblock The dual of compact ordered spaces is a variety.
\newblock \textit{Theory and Applications of Categories},
  \textbf{34}~(44):1401--1439, 2019.
\newblock \href{http://arxiv.org/abs/1902.07162}{{\ttfamily arXiv:1902.07162
  [math.LO]}}.

\bibitem[AR94]{AR94}
\textsc{Ad{\'a}mek, J. and Rosick{\'y}, J.}
\newblock \textit{Locally presentable and accessible categories}, volume 189 of
  \textit{London Mathematical Society Lecture Note Series}.
\newblock Cambridge University Press, Cambridge, 1994.

\bibitem[AR20]{AR20}
\textsc{Abbadini, M. and Reggio, L.}
\newblock On the axiomatisability of the dual of compact ordered spaces.
\newblock \textit{Applied Categorical Structures}, \textbf{28}~(6):921--934,
  2020.
\newblock \href{http://arxiv.org/abs/1909.01631}{{\ttfamily arXiv:1909.01631
  [math.CT]}}.

\bibitem[Bor94]{Bor94a}
\textsc{Borceux, F.}
\newblock \textit{Handbook of categorical algebra 2}, volume~51 of
  \textit{Encyclopedia of Mathematics and its Applications}.
\newblock Cambridge University Press ({CUP}), Cambridge, 1994.
\newblock Categories and structures.

\bibitem[Bou98]{Bourbaki}
\textsc{Bourbaki, N.}
\newblock \textit{General topology. {C}hapters 1--4}.
\newblock Elements of Mathematics (Berlin). Springer-Verlag, Berlin, 1998.
\newblock Translated from the French, Reprint of the 1989 English translation.

\bibitem[CKP93]{CarboniKellyEtAl1993}
\textsc{Carboni, A., Kelly, G.~M., and Pedicchio, M.~C.}
\newblock Some remarks on {M}al'tsev and {G}oursat categories.
\newblock \textit{Applied Categorical Structures}, \textbf{1}~(4):385--421,
  1993.

\bibitem[CLP91]{CignoliLafalceEtAl1991}
\textsc{Cignoli, R., Lafalce, S., and Petrovich, A.}
\newblock Remarks on {P}riestley duality for distributive lattices.
\newblock \textit{Order}, \textbf{8}~(3):299--315, 1991.

\bibitem[Dus69]{Dus69}
\textsc{Duskin, J.}
\newblock Variations on {B}eck's tripleability criterion.
\newblock In \textsc{MacLane, S.}, editor, \textit{Reports of the {M}idwest
  {C}ategory {S}eminar {III}}, pages 74--129. Springer Berlin Heidelberg, 1969.

\bibitem[EH01]{EH01}
\textsc{Escard{\'{o}}, M.~H. and Heckmann, R.}
\newblock Topologies on spaces of continuous functions.
\newblock \textit{Topology Proceedings}, \textbf{26}~(2):545--564, 2001.
\newblock Proceedings of the 16\textsuperscript{th} {S}ummer {C}onference on
  {G}eneral {T}opology and its {A}pplications ({N}ew {Y}ork).

\bibitem[Eng89]{Engelking1989}
\textsc{Engelking, R.}
\newblock \textit{General topology}, volume~6 of \textit{Sigma series in pure
  mathematics}.
\newblock Heldermann, Berlin, rev. and completed edition, 1989.

\bibitem[Fla97]{Fla97}
\textsc{Flagg, R.~C.}
\newblock Quantales and continuity spaces.
\newblock \textit{Algebra Universalis}, \textbf{37}~(3):257--276, 1997.

\bibitem[FS90]{FreydScedrov1990}
\textsc{Freyd, P.~J. and Scedrov, A.}
\newblock \textit{Categories, allegories}, volume~39 of \textit{North-Holland
  Mathematical Library}.
\newblock North-Holland Publishing Co., Amsterdam, 1990.

\bibitem[Gel41]{Gel41a}
\textsc{Gelfand, I.}
\newblock Normierte {R}inge.
\newblock \textit{Recueil Math{\'e}matique. Nouvelle S{\'e}rie},
  \textbf{9}~(1):3--24, 1941.

\bibitem[GH09]{GivantHalmos2009}
\textsc{Givant, S. and Halmos, P.}
\newblock \textit{Introduction to {B}oolean algebras}.
\newblock Undergraduate Texts in Mathematics. Springer, New York, 2009.

\bibitem[GH13]{GH13}
\textsc{Gutierres, G. and Hofmann, D.}
\newblock Approaching metric domains.
\newblock \textit{Applied Categorical Structures}, \textbf{21}~(6):617--650,
  2013.
\newblock \href{http://arxiv.org/abs/1103.4744}{{\ttfamily arXiv:1103.4744
  [math.GN]}}.

\bibitem[GHK{\etalchar{+}}03]{GHK+03}
\textsc{Gierz, G., Hofmann, K.~H., Keimel, K., Lawson, J.~D., Mislove, M.~W.,
  and Scott, D.~S.}
\newblock \textit{Continuous lattices and domains}, volume~93 of
  \textit{Encyclopedia of Mathematics and its Applications}.
\newblock Cambridge University Press, Cambridge, 2003.

\bibitem[HN18]{HN18}
\textsc{Hofmann, D. and Nora, P.}
\newblock Enriched {S}tone-type dualities.
\newblock \textit{Advances in Mathematics}, \textbf{330}:307--360, 2018.
\newblock \href{http://arxiv.org/abs/1605.00081}{{\ttfamily arXiv:1605.00081
  [math.CT]}}.

\bibitem[HN20]{HN20}
\textsc{Hofmann, D. and Nora, P.}
\newblock Hausdorff {C}oalgebras.
\newblock \textit{Applied Categorical Structures}, \textbf{28}~(5):773--806,
  2020.
\newblock \href{http://arxiv.org/abs/1908.04380}{{\ttfamily arXiv:1908.04380
  [math.CT]}}.

\bibitem[HN23]{HN23}
\textsc{Hofmann, D. and Nora, P.}
\newblock Duality theory for enriched {P}riestley spaces.
\newblock \textit{Journal of Pure and Applied Algebra},
  \textbf{227}~(3):107231, 2023.
\newblock \href{http://arxiv.org/abs/2009.02303}{{\ttfamily arXiv:2009.02303
  [math.CT]}}.

\bibitem[HNN18]{HNN18}
\textsc{Hofmann, D., Neves, R., and Nora, P.}
\newblock Generating the algebraic theory of ${C}({X})$: the case of partially
  ordered compact spaces.
\newblock \textit{Theory and Applications of Categories},
  \textbf{33}~(12):276--295, 2018.
\newblock \href{http://arxiv.org/abs/1706.05292}{{\ttfamily arXiv:1706.05292
  [math.CT]}}.

\bibitem[HR18]{HR18}
\textsc{Hofmann, D. and Reis, C.~D.}
\newblock Convergence and quantale-enriched categories.
\newblock \textit{Categories and General Algebraic Structures with
  Applications}, \textbf{9}~(1):77--138, 2018.
\newblock \href{http://arxiv.org/abs/1705.08671}{{\ttfamily arXiv:1705.08671
  [math.CT]}}.

\bibitem[Law73]{Law73}
\textsc{Lawvere, F.~W.}
\newblock Metric spaces, generalized logic, and closed categories.
\newblock \textit{Rendiconti del Seminario Matem{\`a}tico e Fisico di Milano},
  \textbf{43}~(1):135--166, 1973.
\newblock {Republished in: Reprints in Theory and Applications of Categories,
  No. 1 (2002), 1--37}.

\bibitem[Man69]{Man69}
\textsc{Manes, E.~G.}
\newblock A triple theoretic construction of compact algebras.
\newblock In \textsc{Eckmann, B.}, editor, \textit{Seminar on {T}riples and
  {C}ategorical {H}omology {T}heory}, volume~80 of \textit{Lecture {Notes} in
  {Mathematics}}, pages 91--118. Springer Berlin Heidelberg, Berlin,
  Heidelberg, 1969.

\bibitem[Nac65]{Nac65}
\textsc{Nachbin, L.}
\newblock \textit{Topology and {O}rder}, volume~4 of \textit{Van Nostrand
  Mathematical Studies}.
\newblock D. Van Nostrand, Princeton, N.J.-Toronto, Ont.-London, 1965.
\newblock Translated from the Portuguese by Lulu Bechtolsheim.

\bibitem[Neg71]{Neg71}
\textsc{Negrepontis, J.~W.}
\newblock Duality in analysis from the point of view of triples.
\newblock \textit{Journal of Algebra}, \textbf{19}~(2):228--253, 1971.

\bibitem[Pon34]{Pon34}
\textsc{Pontrjagin, L.~S.}
\newblock The theory of topological commutative groups.
\newblock \textit{The Annals of Mathematics}, \textbf{35}~(2):361, 1934.

\bibitem[Pri70]{Pri70}
\textsc{Priestley, H.~A.}
\newblock Representation of distributive lattices by means of ordered {S}tone
  spaces.
\newblock \textit{Bulletin of the London Mathematical Society},
  \textbf{2}~(2):186--190, 1970.

\bibitem[Pri72]{Pri72}
\textsc{Priestley, H.~A.}
\newblock Ordered topological spaces and the representation of distributive
  lattices.
\newblock \textit{Proceedings of the London Mathematical Society. Third
  Series}, \textbf{24}~(3):507--530, 1972.

\bibitem[Rin72]{Ringel1972}
\textsc{Ringel, C.~M.}
\newblock The intersection property of amalgamations.
\newblock \textit{Journal of Pure and Applied Algebra},
  \textbf{2}~(4):341--342, 1972.

\bibitem[Sch02]{Schmid2002}
\textsc{Schmid, J.}
\newblock Quasiorders and sublattices of distributive lattices.
\newblock \textit{Order}, \textbf{19}~(1):11--34, 2002.

\bibitem[Sto36]{Sto36}
\textsc{Stone, M.~H.}
\newblock The theory of representations for {B}oolean algebras.
\newblock \textit{Transactions of the American Mathematical Society},
  \textbf{40}~(1):37--111, 1936.

\bibitem[Sto38]{Sto38}
\textsc{Stone, M.~H.}
\newblock Topological representations of distributive lattices and {B}rouwerian
  logics.
\newblock \textit{Časopis pro pěstování matematiky a fysiky},
  \textbf{67}~(1):1--25, 1938.

\bibitem[Tho09]{Tho09}
\textsc{Tholen, W.}
\newblock Ordered topological structures.
\newblock \textit{Topology and its Applications}, \textbf{156}~(12):2148--2157,
  2009.

\end{thebibliography}

\newcommand{\etalchar}[1]{$^{#1}$}

\end{document}